\documentclass{amsart}

\newtheorem{theorem}{Theorem}[section]
\newtheorem{proposition}{Proposition}[section]

\theoremstyle{definition}
\newtheorem{definition}[theorem]{Definition}
\newtheorem{example}[theorem]{Example}

\theoremstyle{remark}
\newtheorem{remark}[theorem]{Remark}

\numberwithin{equation}{section}

\begin{document}

\title{Jordan left $\{\lowercase{g}, \lowercase{h}\}$-derivations over some algebras}


\author{Arindam Ghosh}
\address{Department of Mathematics, Indian Institute of Technology Patna, Patna-801106}
\curraddr{}
\email{E-mail: arindam.pma14@iitp.ac.in}
\thanks{}

\author{Om Prakash$^{\star}$}
\address{Department of Mathematics, Indian Institute of Technology Patna, Patna-801106}
\curraddr{}
\email{om@iitp.ac.in}
\thanks{* Corresponding author}

\subjclass[2010]{16W10, 16W25, 47L35, 11R52}

\keywords{Derivation; Jordan  derivation; left $\{g, h\}$-Derivation; Jordan left $\{g, h\}$-derivation; Tensor product; Algebras}

\date{}

\dedicatory{}

\begin{abstract}
In this article, left $\{g, h\}$-derivation and Jordan left $\{g, h\}$-derivation on algebras are introduced. It is shown that there is no Jordan left $\{g, h\}$-derivation over $\mathcal{M}_n(C)$ and  $\mathbb{H}_{\mathbb{R}}$, for $g\neq h$. Examples are given which show that every Jordan left $\{g, h\}$-derivation over $\mathcal{T}_n(C)$, $\mathcal{M}_n(C)$ and $\mathbb{H}_{\mathbb{R}}$ are not left $\{g, h\}$-derivations. Moreover, we characterize left $\{g, h\}$-derivation and Jordan left $\{g, h\}$-derivation over $\mathcal{T}_n(C)$, $\mathcal{M}_n(C)$ and $\mathbb{H}_{\mathbb{R}}$ respectively. Also, we prove the result of Jordan left $\{g, h\}$-derivation to be a left $\{g, h\}$-derivation over tensor products of algebras as well as for algebra of polynomials.
\end{abstract}

\maketitle

\section{Introduction}

Throughout this paper, $C$ represents a $2$-torsion free commutative ring with unity unless otherwise stated. A ring $R$ is a $2$-torsion free if $2a=0$ for $a\in R$ implies $a=0$. Let $R$ be a ring. An additive map $D:R \rightarrow R$ is said to be a derivation if $D(ab)=D(a)b+aD(b)$, for all $a,b\in R$ and a Jordan derivation if $D(a^2)=D(a)a+aD(a)$, for all $a\in R$. Jordan derivation over rings and algebras have been studied by many researchers \cite{her,cus,ben,bre,bresa,gho,zha,zhan}. In 1990, Jordan left derivation was introduced by Bre\v{s}ar and Vukman  \cite{bresar}. They proved that the existence of a nonzero left derivation of a prime ring of characteristic not 2 implies the commutativity of the ring. After that many new results have been established on Jordan left derivations over different rings and algebras \cite{ash,bres,den,ghos,vuk,xu}. Recently, in 2016, Bre\v{s}ar introduced $\{g, h\}$-derivation and studied over semiprime algebras and tensor product of algebras \cite{bresar}. Let $A$ be an algebra over $C$ and $f,g,h:A \rightarrow A$ be linear maps. Then $f$ is said to be a left derivation if $f(ab)=af(b)+bf(a)$ and $f$ is said to be a $\{g, h\}$-derivation if $f(ab)=g(a)b+ah(b)=h(a)b+ag(b)$.

A linear map $f:A \rightarrow A$ is said to be a left centralizer if $f(ab)=f(a)b$, and it is a right centralizer if $f(ab)=af(b)$, for all $a,b\in A$. It is a two sided centralizer if $f$ is both left as well as right centralizers. Note that if $A$ has an identity element, then $f$ is a left centralizer iff there exist an element $\alpha \in A$ such that $f(a)=\alpha a$, for all $a\in A$. Also, $f$ is a right centralizer iff there exist an element $\beta \in A$ such that $f(a)=a \beta $, for all $a\in A$.

Motivated by left derivation \cite{bres} and $\{g, h\}$-derivation \cite{bresar}, we introduce left $\{g, h\}$-derivation over $A$ as follows:
\begin{definition} The map $f$ is a left $\{g, h\}$-derivation if
\begin{equation}
\label{1}
f(ab)=ag(b)+bh(a)=ah(b)+bg(a), ~\text{for all}~ a, b \in A.
\end{equation}
\end{definition}
Clearly, if $f=g=h$, then $f$ is a left derivation. Similarly, we define Jordan left $\{g, h\}$-derivation.
\begin{definition} The map $f$ is said to be a Jordan left $\{g, h\}$-derivation if
\begin{equation}
\label{2}
f(a\circ b)=2(ag(b)+bh(a)), ~\text{for all}~ a, b \in A
\end{equation} (where, $a\circ b=ab+ba$).
\end{definition}
Since $a\circ b=b\circ a$, whenever $f$ is a Jordan left $\{g, h\}$-derivation on $A$, then $f(a\circ b)=2(bg(a)+ah(b))$ and $f(a\circ b)=ag(b)+bh(a)+bg(a)+ah(b)$, for all $a,b \in A$. Also, $f(a^2)=a(g(a)+h(a))$, for all $a\in A$.

Note that, if $A$ is a 2-torsion free commutative algebra over $C$, then every Jordan left $\{g, h\}$-derivation is a left $\{g, h\}$-derivation. This is not true when $A$ is not $2$-torsion free. For example:

\begin{example}
Let $A=\mathbb{Z}_4$. Then $A$ is a commutative $\mathbb{Z}$-algebra, which is not $2$-torsion free. Now $f:A\rightarrow A$ is defined as, $f(x)=2x$, for all $x\in A$. Then $f$ is a Jordan left $\{f, f\}$-derivation. But $f([1][1])=[2]\neq [4]=[1]f([1])+[1]f([1])$. Therefore, $f$ is not a left $\{f, f\}$-derivation.
\end{example}

Now, let $f$ be a left $\{g,h\}$ derivation over $A$ and $\lambda=g(1)+h(1)$. Now define $d$ on $A$ by $d(a)=f(a)-\lambda a$. If $\lambda\in Z$ (where $Z$ is the center of $A$), then $d$ is a left derivation. So, every left $\{g,h\}$ derivation can be written as
\begin{equation}
\label{5}
f(a)=\lambda a+d(a),  ~\text{for all}~ a\in A.
\end{equation}

Also, if $f$ is of the form \eqref{5}, where $\lambda\in Z$ and $d$ is a left derivation on $A$, then $f$ is a left $\{g,h\}$ derivation for some linear maps $g$ and $h$, specially here, for $g=f$ and $h=d$.

\begin{remark}
If $f$ is a Jordan left $\{g, h\}$-derivation over $A$, then $f$ is a left $\{g+h, g+h\}$-derivation.
\end{remark}

\section{Jordan Left $\{\lowercase{g}, \lowercase{h}\}$-Derivation on Matrix Algebras}
Here, we start with an example of a left $\{g, h\}$-derivation which is not a $\{g, h\}$-derivation.
\begin{example}
Let $A=\mathcal{T}_2(C)$, algebra of $2\times 2$ upper triangular matrices over $C$. Also, let $e_{ij}$ be the matrix whose $(i,j)$th entry is $1$, otherwise $0$. Define $g:A\rightarrow A$ as $g(x)=ax$, for all $x\in A$ where $a=e_{11}$. Then $\textbf{0}$ is a left $\{g, -g\}$-derivation. But $g(e_{12})e_{22}+e_{12}(-g)(e_{22})=e_{12}\neq 0$, therefore, $\bold{0}$ is not $\{g, -g\}$-derivation.
\end{example}

Now, we give an example of a $\{g, h\}$-derivation which is not a left $\{g, h\}$-derivation.

\begin{example}
Define $f,g:\mathcal{T}_2(C)\rightarrow \mathcal{T}_2(C)$ as $f(x)=x+g(x)$ and $g(x)=ax-xa$ for all $x\in \mathcal{T}_2(C)$ where $a=e_{11}+e_{12}+e_{22}$. Then $f$ is an $\{f, g\}$-derivation. But $e_{11}f(e_{22})+e_{22}g(e_{11})=a\neq 0=f(e_{11}e_{22})$, therefore, $f$ is not a left $\{f, g\}$-derivation.
\end{example}

Note that every left $\{g, h\}$-derivation over $A$ is a Jordan left $\{g, h\}$-derivation but the converse need not be true.
\begin{example}
\label{ex1}
Let $A=\mathcal{T}_2(C)$, $X=\begin{bmatrix}
x_1 & x_2 \\
0 & x_3 \end{bmatrix}\in A$ and $f,g,h:A\rightarrow A$ are defined by $f(X)= \begin{bmatrix}
5x_1 & 7x_1+6x_2 \\
0 & 6x_3 \end{bmatrix}$, $g(X)= \begin{bmatrix}
x_1 & 2x_1+3x_2 \\
0 & 3x_3 \end{bmatrix}$ and $h(X)=\begin{bmatrix}
4x_1 & 5x_1+3x_2 \\
0 & 3x_3 \end{bmatrix}$ respectively. Then $f$ is a Jordan left $\{g, h\}$-derivation. Now, $f(e_{12}e_{11})=f(0)=0$, but $e_{12}g(e_{11})+e_{11}h(e_{12})=3e_{12}\neq 0$. So, $f$ is not a left $\{g, h\}$-derivation.
\end{example}

One can see easily in above example, $f(e_{12}\circ e_{11})=f(e_{12})=6e_{12} \neq 7e_{12}=g(e_{12})\circ e_{11} + e_{12}\circ h(e_{11})$. Therefore, $f$ is not a Jordan $\{g, h\}$-derivation. Also, we give an example of a nonzero left $\{g, h\}$-derivation over an algebra which is not a $\{g, h\}$-derivation.

\begin{example}
Let $A=\mathcal{T}_2(C)$, $X= \begin{bmatrix}
x_1 & x_2 \\
0 & x_3 \end{bmatrix}\in A$ and $f,g,h:A\rightarrow A$ are defined by $f(X)= \begin{bmatrix}
x_1 & x_1 \\
0 & 0 \end{bmatrix}$, $g(X)= \begin{bmatrix}
x_1 & 0 \\
0 & 0 \end{bmatrix}$ and $h(X)= \begin{bmatrix}
0 & x_1 \\
0 & 0 \end{bmatrix}$ respectively. Then $f$ is a left $\{g, h\}$-derivation. Now, $f(e_{11}(e_{11}+e_{12}))=f(e_{11}+e_{12})=e_{11}+e_{12}\neq e_{11}+2e_{12}=g(e_{11})(e_{11}+e_{12})+e_{11}h(e_{11}+e_{12}))$. Therefore, $f$ is not a $\{g, h\}$-derivation.
\end{example}

In next results, we characterize Jordan left $\{g, h\}$-derivation and left $\{g, h\}$-derivation over $\mathcal{T}_n(C)$.

\begin{theorem}
Let $\mathcal{T}_n(C)$, $n\geq 2$, be the algebra of $n\times n$ upper triangular matrices over $C$. Then $f$ is a Jordan left $\{g, h\}$-derivation over $\mathcal{T}_n(C)$ if and only if there exists $\dfrac{n(n+3)}{2}$ elements in $C$ such that
\begin{align*}
& g(A)=\sum\limits_{1 \leq i \leq j \leq n}(\sum_{k=i}^{j}a_{ik}g_{kj}^{(kk)})e_{ij}; \\
& h(A)=a_{11}h_{11}^{(11)}e_{11}
+\sum_{j=2}^{n}(a_{11}h_{1j}^{(11)}
+\sum_{k=2}^{j}a_{1k}g_{kj}^{(kk)})e_{1j}
+\sum\limits_{1 < i \leq j \leq n}(\sum_{k=i}^{j}a_{ik}g_{kj}^{(kk)})e_{ij}; \\
& f(A)=(g+h)(A),~\text{for all}~A\in \mathcal{T}_n(C)\\ & \text{where}~g_{lm}^{(np)},h_{lm}^{(np)}\in C~\text{and}~ A=\sum\limits_{1 \leq i \leq j \leq n}a_{ij}e_{ij}.
\end{align*}

Moreover, if $f$ is a Jordan left $\{g, h\}$-derivation over $\mathcal{T}_n(C)$, then $f$, $g$ and $h$ are right centralizers.
\end{theorem}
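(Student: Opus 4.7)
The plan is to verify the ``if'' direction by direct computation and to handle the ``only if'' direction via an identity-element substitution followed by a Jordan-left-derivation reduction. For the ``if'' direction, writing $G = g(I)$ and $H = h(I)$, the stated formulas are exactly $g(A) = AG$ and $h(A) = AH$ with $G$ and $H$ agreeing on rows $2,\dots,n$; since every commutator $[A,B]$ is strictly upper triangular while $G - H$ is supported on row~$1$, the product $[A,B](G-H)$ vanishes, and a short algebraic manipulation reduces the Jordan left $\{g,h\}$-identity $(A\circ B)(G+H) = 2(ABG + BAH)$ to exactly this vanishing.

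For the converse, set $b = I$ in the polarized identity $f(a\circ b) = 2(ag(b) + bh(a))$. Since $a\circ I = 2a$, this yields $f(a) = ag(I) + h(a)$; the symmetric form $2(ah(b) + bg(a))$ gives $f(a) = ah(I) + g(a)$, and subtraction produces the key relation $g(a) - h(a) = a(g(I) - h(I))$. Substituting $f(a) = ag(I) + h(a)$ into the squared identity $f(a^{2}) = a(g(a) + h(a))$ and simplifying with this relation leads to $h(a^{2}) = 2ah(a) - a^{2}h(I)$. Hence $\tilde h(a) := h(a) - ah(I)$ satisfies $\tilde h(a^{2}) = 2a\tilde h(a)$; polarization then shows $\tilde h$ is a Jordan left derivation on $\mathcal{T}_n(C)$ with $\tilde h(I) = 0$.

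The key technical step is showing $\tilde h \equiv 0$ by matrix-unit analysis. From $\tilde h(e_{ii}) = 2e_{ii}\tilde h(e_{ii})$, expanding $\tilde h(e_{ii}) = \sum_{l\le m} c_{lm}e_{lm}$ and computing $e_{ii}\tilde h(e_{ii}) = \sum_{m\ge i} c_{im}e_{im}$ forces $c_{lm} = 0$ for $l\ne i$, while the coefficient of $e_{im}$ yields $c_{im} = 2c_{im}$, hence $c_{im} = 0$ by the $2$-torsion-free hypothesis; so $\tilde h(e_{ii}) = 0$. Next, for $i < j$, the relation $e_{ii}\circ e_{ij} = e_{ij}$ applied to the polarized Jordan-left-derivation identity produces $\tilde h(e_{ij}) = 2e_{ii}\tilde h(e_{ij})$, and the same coefficient argument delivers $\tilde h(e_{ij}) = 0$. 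By linearity $\tilde h \equiv 0$, so $h(a) = aH$; using $g(a) = h(a) + a(G - H)$ gives $g(a) = aG$, and then $f(a) = ag(I) + h(a) = a(G+H) = g(a) + h(a)$.

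Finally, the constraint between $G$ and $H$ comes from the full polarized identity at general $(a,b)$: the equation $a(g(b) - h(b)) = b(g(a) - h(a))$ combined with $g(b) - h(b) = b(G-H)$ becomes $[a,b](G-H) = 0$ for all $a, b$. Specialising $a = e_{ii}$, $b = e_{ij}$ with $i < j$ gives $[a,b] = e_{ij}$, and $e_{ij}(G - H) = 0$ is equivalent to the $j$-th row of $G - H$ vanishing; letting $j$ range over $\{2,\dots,n\}$ shows $G$ and $H$ coincide on all rows except the first. Writing $G_{kj} = g_{kj}^{(kk)}$ for $1 \le k \le j \le n$ and $H_{1j} = h_{1j}^{(11)}$ for $1 \le j \le n$, and then expanding $(AG)_{ij} = \sum_{k=i}^{j} a_{ik}G_{kj}$ and $(AH)_{ij} = \sum_{k=i}^{j} a_{ik}H_{kj}$ entrywise, recovers the displayed formulas; the parameter count is $\tfrac{n(n+1)}{2} + n = \tfrac{n(n+3)}{2}$. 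The ``Moreover'' clause is immediate because $g, h$, and $f = g + h$ are all right multiplications. The main obstacle is the matrix-unit argument for $\tilde h \equiv 0$: one must track the support of $\tilde h(e_{ij})$ through left multiplication by $e_{ii}$ and invoke $2$-torsion freeness at precisely the right moment, without which a nontrivial Jordan-left-derivation component could persist and destroy the right-centralizer conclusion.
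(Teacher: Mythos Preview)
Your argument is correct but takes a markedly different route from the paper's. The paper proceeds by brute-force matrix-unit bookkeeping: it writes $g(e_{ij})$, $h(e_{ij})$, $f(e_{ij})$ with undetermined coefficients and then runs through the Jordan identities $e_{ii}^2=e_{ii}$, $e_{ii}\circ e_{jj}=0$, $e_{ii}\circ e_{ij}=e_{ij}$, $e_{jj}\circ e_{ij}=e_{ij}$, $e_{kk}\circ e_{ij}=0$, extracting linear relations among the coefficients until the stated formulas emerge; the right-centralizer conclusion is then read off from the converse computation. Your approach is more structural: substituting $b=I$ immediately globalises the problem to $f(a)=aG+h(a)=aH+g(a)$, whence $g-h$ is the right multiplication by $G-H$; you then isolate a single Jordan left derivation $\tilde h$ and kill it with a short idempotent argument, and finally encode the $G$--$H$ compatibility as the commutator condition $[A,B](G-H)=0$, which is forced by the fact that commutators in $\mathcal{T}_n(C)$ are strictly upper triangular (using commutativity of $C$) while $G-H$ lives in the first row. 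The paper's method is entirely elementary and requires no insight beyond patience, whereas your reduction explains \emph{why} $f,g,h$ must be right centralizers and would transplant to any unital $2$-torsion-free algebra in which every Jordan left derivation vanishing at $1$ is identically zero; the commutator-versus-first-row observation is the genuinely triangular ingredient, and it replaces several pages of coefficient chasing with a single line.
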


\begin{proof}
Let $f$ be a Jordan left $\{g, h\}$-derivation over $\mathcal{T}_n(C)$. Now, let
\begin{equation}
\label{n4}
g(e_{ij})=\sum\limits_{1 \leq m \leq p \leq n}g_{mp}^{(ij)}e_{mp},
\end{equation}

\begin{equation}
\label{n5}
h(e_{ij})=\sum\limits_{1 \leq m \leq p \leq n}h_{mp}^{(ij)}e_{mp},
\end{equation}
and
\begin{equation}
\label{n6}
f(e_{ij})=\sum\limits_{1 \leq m \leq p \leq n}f_{mp}^{(ij)}e_{mp}~, ~\text{where} ~g_{mp}^{(ij)},h_{mp}^{(ij)},f_{mp}^{(ij)} \in C.
\end{equation}

Since $e_{ii}^2=e_{ii}$ for $i\in \{1,2,\dots,n\}$ and $f$ is a Jordan left $\{g, h\}$-derivation over $\mathcal{T}_n(C)$, so $f(e_{ii})=e_{ii}g(e_{ii})+e_{ii}h(e_{ii})$ and this implies
\begin{equation}
\label{n7}
\begin{aligned}
& f_{ii}^{(ii)}=g_{ii}^{(ii)}+h_{ii}^{(ii)},f_{i,i+1}^{(ii)}=g_{i,i+1}^{(ii)}+h_{i,i+1}^{(ii)},\dots,f_{in}^{(ii)}=g_{in}^{(ii)}+h_{in}^{(ii)}, \\
&\text{and other entries of}~ f(e_{ii})~\text{are zero except}~\text{$(ii)$-th to}~\text{$(in)$-th entries}\\
&~(\text{by}~\eqref{n4},\eqref{n5} ~\text{and}~\eqref{n6}).
\end{aligned}
\end{equation}

Let $i\neq j$. Since $e_{ii}\circ e_{jj}=0$, by using \eqref{n4}, \eqref{n5}, \eqref{n6} and \eqref{2}, we have
\begin{equation}
\label{n8}
\begin{aligned}
g_{ii}^{(jj)}=g_{i,i+1}^{(jj)}=\dots=g_{in}^{(jj)}=0=h_{jj}^{(ii)}=h_{j,j+1}^{(ii)}=\dots=h_{jn}^{(ii)}.
\end{aligned}
\end{equation}

Let $i<j$. Since $e_{ij}\circ e_{ii}=e_{ij}=e_{ii}\circ e_{ij}$, we have
\begin{equation}
\label{n9}
\begin{aligned}
& 2h_{ik}^{(ij)}=f_{ik}^{(ij)}=2g_{ik}^{(ij)}, ~\text{for}~k=\{i,i+1,\dots,j-1\}.\\
& \text{Also,}~ 2(g_{jl}^{(ii)}+h_{il}^{(ij)})=f_{il}^{(ij)}=2(h_{jl}^{(ii)}+g_{il}^{(ij)}),~\text{for}~l=\{j,j+1,\dots,n\}\\
&\implies 2h_{il}^{(ij)}=f_{il}^{(ij)}=2g_{il}^{(ij)},~\text{for}~l=\{j,j+1,\dots,n\} ~(\text{by}~\eqref{n8})\\
&\text{and other entries of}~ f(e_{ij})~\text{are zero except}~\text{$(ii)$-th to}~\text{$(in)$-th entry.}
\end{aligned}
\end{equation}

Similarly, from $e_{jj}\circ e_{ij}=e_{ij}=e_{ij}\circ e_{jj}$,
\begin{equation}
\label{n10}
\begin{aligned}
& 2h_{jk}^{(jj)}=f_{ik}^{(ij)}=2g_{jk}^{(jj)},~\text{for}~k=\{j,j+1,\dots,n\}~(\text{by}~ \eqref{n9}),\\
& g_{jl}^{(ij)}=0=h_{jl}^{(ij)},~\text{for}~l=\{j,j+1,\dots,n\}~(\text{by}~ \eqref{n9})\\
&\text{and other entries of}~ f(e_{ij})~\text{are zero except}~\text{$(ij)$-th to}~\text{$(in)$-th entry.}
\end{aligned}
\end{equation}

Therefore, for any $i<j$, by \eqref{n9} and \eqref{n10},
\begin{equation}
\label{n11}
\begin{aligned}
& g_{ik}^{(ij)}=h_{ik}^{(ij)}=h_{jk}^{(jj)}=g_{jk}^{(jj)},~\text{for}~k=\{j,j+1,\dots,n\},\\
& g_{il}^{(ij)}=0=h_{il}^{(ij)},~\text{for}~l=\{i,i+1,\dots,j-1\}.
\end{aligned}
\end{equation}

Let $k\neq i,j$. Since $e_{kk}\circ e_{ij}=0$,
\begin{equation}
\label{n12}
\begin{aligned}
g_{kl}^{(ij)}=0=h_{kl}^{(ij)},~\text{for}~l=\{k,k+1,\dots,n\}.
\end{aligned}
\end{equation}

Hence, from \eqref{n7}-\eqref{n8} and \eqref{n10}-\eqref{n12}, all entries of $g(e_{ij})$, $h(e_{ij})$ and $f(e_{ij})$ are zero except $(ij)$-th to $(in)$-th entry, for all $1\leq i\leq j \leq n$.

Now, let $A\in\mathcal{T}_n(C)$. Then $A=\sum\limits_{1 \leq i \leq j \leq n}a_{ij}e_{ij}$, where $a_{ij}\in C$. Since $g,h$ and $f$ are linear,
\begin{equation}
\label{n13}
\begin{aligned}
& g(A)=\sum\limits_{1 \leq i \leq j \leq n}(\sum_{k=i}^{j}a_{ik}g_{kj}^{(kk)})e_{ij}, \\
& h(A)=a_{11}h_{11}^{(11)}e_{11}
+\sum_{j=2}^{n}(a_{11}h_{1j}^{(11)}
+\sum_{k=2}^{j}a_{1k}g_{kj}^{(kk)})e_{1j}
+\sum\limits_{1 < i \leq j \leq n}(\sum_{k=i}^{j}a_{ik}g_{kj}^{(kk)})e_{ij}, \\
& f(A)=(g+h)(A),~(\text{by}~\eqref{n7},\eqref{n10}~\text{and}~\eqref{n11}).
\end{aligned}
\end{equation}

Therefore, the number of elements of $C$ requires to express $g,h$ and $f$ is equal to $n+n+(n-1)+\dots+1=\frac{n(n+3)}{2}$.

Conversely, let $g,h$ and $f$ be of the form \eqref{n13}, for all $A\in\mathcal{T}_n(C)$, where $A=\sum\limits_{1 \leq i \leq j \leq n}a_{ij}e_{ij}$ and $a_{ij}\in C$. Then $g(A)=A\alpha,~h(A)=A\alpha^{\prime}$ and $f(A)=A(\alpha+\alpha^{\prime})$ where $\alpha=\sum\limits_{1 \leq i \leq j \leq n}g_{ij}^{(ii)}e_{ij}$ and $\alpha^{\prime}=\sum_{j=1}^{n}h_{1j}^{(11)}e_{1j}+\sum\limits_{1 < i \leq j \leq n}g_{ij}^{(ii)}e_{ij}$. Now, let $B\in\mathcal{T}_n(C)$. Then $B=\sum\limits_{1 \leq i \leq j \leq n}b_{ij}e_{ij}$, where $b_{ij}\in C$ and by direct computation,
\begin{align*}
f(AB+BA)=2(Ag(B)+Bh(A)).
\end{align*}

The last conclusion can be easily seen from the converse part.

\end{proof}

\begin{theorem}
Let $\mathcal{T}_n(C)$, $n\geq 2$, be the algebra of $n\times n$ upper triangular matrices over $C$. Then $f$ is a left $\{g, h\}$-derivation over $\mathcal{T}_n(C)$ if and only if there exists $2n$ elements in $C$ such that
\begin{align*}
& g(A)=\sum_{i=1}^{n}a_{11}g_{1i}^{(11)}e_{1i};~
 h(A)=\sum_{i=1}^{n}a_{11}h_{1i}^{(11)}e_{1i};\\
& f(A)=(g+h)(A),~\text{for all}~A\in \mathcal{T}_n(C),\\ &\text{where}~g_{lm}^{(np)},h_{lm}^{(np)}\in C~\text{and}~ A=\sum\limits_{1 \leq i \leq j \leq n}a_{ij}e_{ij}.
\end{align*}
\end{theorem}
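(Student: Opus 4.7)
The plan is to bootstrap from the preceding theorem. First observe that every left $\{g,h\}$-derivation is automatically a Jordan left $\{g,h\}$-derivation: adding $f(ab)=ag(b)+bh(a)$ to $f(ba)=bg(a)+ah(b)$ yields $f(a\circ b)=2(ag(b)+bh(a))$. Consequently $g$, $h$ and $f$ must already have the shape supplied by the preceding theorem, encoded by the $\tfrac{n(n+3)}{2}$ scalars $g_{kj}^{(kk)}$ ($1\le k\le j\le n$) together with $h_{1j}^{(11)}$ ($1\le j\le n$). The task is therefore to show that the stronger left-derivation identity kills every $g_{kj}^{(kk)}$ with $k\ge 2$, leaving precisely the $2n$ parameters $g_{1j}^{(11)}$ and $h_{1j}^{(11)}$ claimed in the statement.

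To obtain those vanishings I would test \eqref{1} on the non-commuting pair $a=e_{ii}$, $b=e_{ij}$ with $1\le i<j\le n$. From the Jordan description one already has $g(e_{ij})=\sum_{l=j}^{n}g_{jl}^{(jj)}e_{il}$ and $f(e_{ij})=2\sum_{l=j}^{n}g_{jl}^{(jj)}e_{il}$. Since $e_{ii}e_{ij}=e_{ij}$, the identity $f(e_{ii}e_{ij})=e_{ii}g(e_{ij})+e_{ij}h(e_{ii})$ becomes
\[
2\sum_{l=j}^{n}g_{jl}^{(jj)}e_{il}=\sum_{l=j}^{n}g_{jl}^{(jj)}e_{il}+e_{ij}h(e_{ii}).
\]
A short check (splitting on whether $i=1$ or $i>1$) gives $e_{ij}h(e_{ii})=0$, because the support of $h(e_{ii})$ lies either in row~$1$ or in row~$i$, neither of which survives left multiplication by $e_{ij}$ when $j>i$. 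Since $C$ is $2$-torsion free this forces $g_{jl}^{(jj)}=0$ for all $2\le j\le l\le n$. Plugging these vanishings into the formulas of the preceding theorem collapses $g(A)$, $h(A)$ and $f(A)$ exactly to the expressions displayed in the statement, and the surviving parameters count $n+n=2n$ as required.

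For the converse I would introduce $\alpha=\sum_{i=1}^{n}g_{1i}^{(11)}e_{1i}$ and $\alpha'=\sum_{i=1}^{n}h_{1i}^{(11)}e_{1i}$ and observe that $g(A)=A\alpha$, $h(A)=A\alpha'$, $f(A)=A(\alpha+\alpha')$, since a direct computation shows $X\alpha=x_{11}\alpha$ and $X\alpha'=x_{11}\alpha'$ for every $X\in\mathcal{T}_n(C)$. Because $A$ and $B$ are upper triangular, $(AB)_{11}=a_{11}b_{11}=(BA)_{11}$, so $AB\alpha=BA\alpha$ and $AB\alpha'=BA\alpha'$. This yields
\[
f(AB)=AB(\alpha+\alpha')=AB\alpha+BA\alpha'=Ag(B)+Bh(A),
\]
and the symmetric computation gives $f(AB)=Ah(B)+Bg(A)$, establishing \eqref{1}.

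The main difficulty is to pick a probe that genuinely distinguishes the left-derivation identity from the Jordan identity. Most natural candidates, such as symmetric pairs $(e_{ii},e_{jj})$, merely reproduce equations already built into the Jordan characterisation, while many other candidate products collapse to tautological $0=0$. An asymmetric pair like $(e_{ii},e_{ij})$ with $i<j$ is what creates the factor-of-two mismatch between $f(e_{ij})=2(g+h)(e_{ij})$ and $e_{ii}g(e_{ij})+e_{ij}h(e_{ii})$, and that mismatch is precisely what forces all the $g_{kj}^{(kk)}$ with $k\ge 2$ to vanish.
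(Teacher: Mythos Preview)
Your proposal is correct. Both you and the paper begin by invoking the Jordan case, but thereafter the routes diverge slightly. The paper only imports the elementary consequences \eqref{n7} and \eqref{n8} of the Jordan identity and then exploits the \emph{zero} products $e_{ij}e_{ii}=0$ and $e_{jj}e_{ij}=0$ (together with $e_{kk}e_{ij}=0$) to force the vanishings $g_{jk}^{(jj)}=h_{jk}^{(jj)}=0$ and $g(e_{ij})=h(e_{ij})=f(e_{ij})=0$ directly. You instead import the \emph{full} formula from the preceding theorem and test the \emph{nonzero} product $e_{ii}e_{ij}=e_{ij}$; the Jordan formula already says $f(e_{ij})=2g(e_{ij})$, so the left identity produces the factor-of-two mismatch $2g(e_{ij})=g(e_{ij})$, whence $g_{jl}^{(jj)}=0$ for $j\ge 2$. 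Your approach is a bit more economical, needing a single probe rather than three, precisely because you use the preceding theorem in full strength. (A minor remark: the implication $2x=x\Rightarrow x=0$ does not actually require $C$ to be $2$-torsion free.) For the converse the paper merely says ``by direct computation''; your argument via $X\alpha=x_{11}\alpha$ and $(AB)_{11}=a_{11}b_{11}$ supplies exactly that computation.
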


\begin{proof}
Let $f$ be a left $\{g, h\}$-derivation over $\mathcal{T}_n(C)$ where $g$, $h$ and $f$ be of the form \eqref{n4}, \eqref{n5} and \eqref{n6} respectively. Since every left $\{g, h\}$-derivation over $\mathcal{T}_n(C)$ is a Jordan left $\{g, h\}$-derivation, so \eqref{n7} and \eqref{n8} hold.

Let $i<j$. Since $e_{ij}e_{ii}=0$, from $e_{ij}g(e_{ii})+e_{ii}h(e_{ij})=0=e_{ij}h(e_{ii})+e_{ii}g(e_{ij})$ (by \eqref{1}), we have
\begin{equation}
\label{n14}
\begin{aligned}
& h_{ik}^{(ij)}=0=g_{ik}^{(ij)}, ~\text{for}~k=\{i,i+1,\dots,j-1\}\\
& \text{and}~ g_{jl}^{(ii)}+h_{il}^{(ij)}=0=h_{jl}^{(ii)}+g_{il}^{(ij)},~\text{for}~l=\{j,j+1,\dots,n\}\\
&\implies h_{il}^{(ij)}=0=g_{il}^{(ij)},~\text{for}~l=\{j,j+1,\dots,n\} ~(\text{by}~\eqref{n8}).
\end{aligned}
\end{equation}

Similarly, from the identity $e_{jj}e_{ij}=0$,
\begin{equation}
\label{n15}
\begin{aligned}
& h_{jk}^{(jj)}=0=g_{jk}^{(jj)},~\text{for}~k=\{j,j+1,\dots,n\}\\
& g_{jl}^{(ij)}=0=h_{jl}^{(ij)},~\text{for}~l=\{j,j+1,\dots,n\}.
\end{aligned}
\end{equation}

By \eqref{n7} and \eqref{n15},
\begin{equation}
\label{n16}
\begin{aligned}
f(e_{jj})=0,~\text{for all}~ j>1.
\end{aligned}
\end{equation}

By \eqref{n8} and \eqref{n15},
\begin{equation}
\label{n17}
\begin{aligned}
g(e_{jj})=h(e_{jj})=0,~\text{for all}~ j>1.
\end{aligned}
\end{equation}

Let $k\neq i,j$. Since $e_{kk}e_{ij}=0$,
\begin{equation}
\label{n18}
\begin{aligned}
g_{kk}^{(ij)}=g_{k,k+1}^{(ij)}=\dots=g_{kn}^{(ij)}=0=h_{kk}^{(ij)}=h_{k,k+1}^{(ij)}=\dots=h_{kn}^{(ij)}.
\end{aligned}
\end{equation}

By \eqref{n14}, \eqref{n15} and \eqref{n18},
\begin{equation}
\label{n19}
\begin{aligned}
g(e_{ij})=h(e_{ij})=0,~\text{for all}~ i<j.
\end{aligned}
\end{equation}

Now,
\begin{equation}
\label{n20}
\begin{aligned}
f(e_{ij})&=e_{ii}g(e_{ij})+e_{ij}h(e_{ii})=0,~\text{for all}~ i<j~(\text{by}~ \eqref{n19},\eqref{n5},\eqref{n8}).
\end{aligned}
\end{equation}

Let $A\in\mathcal{T}_n(C)$. Then $A=\sum\limits_{1 \leq i \leq j \leq n}a_{ij}e_{ij}$, where $a_{ij}\in C$. Since $g,h$ and $f$ are linear, by \eqref{n4}-\eqref{n8},\eqref{n16},\eqref{n17},\eqref{n19} and \eqref{n20}, we have
\begin{align*}
& g(A)=\sum_{i=1}^{n}a_{11}g_{1i}^{(11)}e_{1i},~
 h(A)=\sum_{i=1}^{n}a_{11}h_{1i}^{(11)}e_{1i},\\
& f(A)=(g+h)(A),~\text{for all}~A\in \mathcal{T}_n(C).
\end{align*}

Thus, the number of elements from $C$ requires to express $g,h$ and $f$ is $n+n=2n$.

Converse can be proved easily by direct computation.

\end{proof}

Now, we give an example of a Jordan left $\{g, h\}$-derivation $f$ on $\mathcal{T}_n(C)$ where $g,h$ and $f$ are not left centralizers.
\begin{example}
Consider $A= \begin{bmatrix}
a_{11} & a_{12} \\
0 & a_{22} \end{bmatrix}\in \mathcal{T}_2(C)$ and $g,h,f:\mathcal{T}_2(C)\rightarrow \mathcal{T}_2(C)$ are defined by $g(A)= \begin{bmatrix}
a_{11} & -a_{12} \\
0 & -a_{22} \end{bmatrix}$, $h(A)= \begin{bmatrix}
-a_{11} & a_{11}-a_{12} \\
0 & -a_{22} \end{bmatrix}$ and $f(A)= \begin{bmatrix}
0 & a_{11}-2a_{12} \\
0 & -2a_{22} \end{bmatrix}$ respectively. Then $f$ is a Jordan left $\{g, h\}$-derivation on $\mathcal{T}_2(C)$. Here, if $A=\begin{bmatrix}
1 & 2 \\
0 & 3 \end{bmatrix}$ and $B=\begin{bmatrix}
4 & 5 \\
0 & 6 \end{bmatrix}$, then $g(AB)\neq g(A)B$, $h(AB)\neq h(A)B$ and $f(AB)\neq f(A)B$. Therefore, $g,h$ and $f$ are not left centralizers.
\end{example}

Our next result characterize Jordan left $\{g, h\}$-derivation on full matrix algebras $\mathcal{M}_n(C)$.

\begin{theorem}
Let $f$ be a Jordan left $\{g, h\}$-derivation over $\mathcal{M}_n(C)$, $n\geq 2$, the algebra of $n\times n$ matrices over $C$. Then $g=h$.
\end{theorem}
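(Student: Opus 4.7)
The plan is to reduce the statement to a single trace-type identity on the identity element and then exploit the fact that inner derivations of $\mathcal{M}_n(C)$ are rich enough to kill it. The key observation is the symmetry $a\circ b = b\circ a$, which was already used in the introduction to note that any Jordan left $\{g,h\}$-derivation satisfies both $f(a\circ b) = 2(ag(b)+bh(a))$ and $f(a\circ b) = 2(bg(a) + ah(b))$. Setting $\phi = g - h$, subtracting these two expressions gives
\begin{equation*}
a\,\phi(b) = b\,\phi(a)\quad\text{for all } a, b \in \mathcal{M}_n(C).
\end{equation*}

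Next, I would specialize $a = I$, the identity matrix, to obtain $\phi(b) = b\,\phi(I)$ for every $b$, i.e.\ $\phi$ is right multiplication by the single matrix $\phi(I)$. Substituting this expression back into the identity $a\,\phi(b) = b\,\phi(a)$ yields $ab\,\phi(I) = ba\,\phi(I)$, and hence
\begin{equation*}
[a,b]\,\phi(I) = 0\quad\text{for all } a, b \in \mathcal{M}_n(C).
\end{equation*}

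The final step is to show $\phi(I) = 0$, and here we use the standard commutator identities $[e_{ii}, e_{ij}] = e_{ij}$ for $i\neq j$ (valid because $n\geq 2$). This forces $e_{ij}\,\phi(I) = 0$ for every off-diagonal pair $(i,j)$. Writing $\phi(I) = \sum_{k,l} c_{kl}e_{kl}$ and computing $e_{ij}\phi(I) = \sum_l c_{jl}e_{il}$, linear independence of the $e_{il}$ gives $c_{jl} = 0$ for all $l$, for every row index $j$ which admits some $i\neq j$; since $n\geq 2$ this covers every $j$. Hence $\phi(I) = 0$, so $\phi \equiv 0$ and $g = h$.

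I do not expect a serious obstacle: the whole argument is essentially an algebraic manipulation followed by an elementary-matrix commutator computation. The only point to watch is that the derivation of $a\phi(b) = b\phi(a)$ requires no 2-torsion assumption beyond what is already standing ($C$ is 2-torsion free), and that $n\geq 2$ is genuinely used to guarantee that for each index $j$ there is an $i\neq j$ so that $e_{ij}\phi(I) = 0$ kills the entire $j$-th row of $\phi(I)$.
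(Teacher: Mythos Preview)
Your argument is correct and is genuinely different from the paper's proof. The paper works entirely at the level of matrix units: it expands $g(e_{ij})$ and $h(e_{ij})$ in coordinates, then feeds the Jordan identity the relations $e_{ii}\circ e_{jj}=0$, $e_{ii}\circ e_{ij}=e_{ij}$, $e_{ij}\circ e_{jj}=e_{ij}$, and $e_{kk}\circ e_{ij}=0$ one at a time to kill or match coefficients, eventually concluding $g(e_{ij})=h(e_{ij})$ for every pair $(i,j)$. Your route is more structural: you first extract the single functional identity $a\phi(b)=b\phi(a)$ for $\phi=g-h$, specialize $a=I$ to see $\phi$ is right multiplication by $\phi(I)$, and then use that commutators $[e_{ii},e_{ij}]=e_{ij}$ annihilate $\phi(I)$ on the left to force $\phi(I)=0$.

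Your proof is shorter and isolates exactly what is being used: the unit, $2$-torsion freeness, and the fact that the commutator space of $\mathcal{M}_n(C)$ has trivial right annihilator. In particular it would transfer verbatim to any unital $2$-torsion free algebra with that annihilator property. The paper's coordinate approach, while lengthier, has the advantage of producing along the way the explicit form of $f$, $g$, $h$ on matrix units, which it re-uses in the subsequent characterization of Jordan left $\{g,g\}$-derivations on $\mathcal{M}_n(C)$.
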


\begin{proof}
Suppose $f$ is a Jordan left $\{g, h\}$-derivation over $\mathcal{M}_n(C)$. Now, let
\begin{equation}
\label{6}
g(e_{ij})=\sum_{k=1}^{n}\sum_{l=1}^{n}g_{kl}^{(ij)}e_{kl}
\end{equation}

and \begin{equation}
\label{7}
h(e_{ij})=\sum_{k=1}^{n}\sum_{l=1}^{n}h_{kl}^{(ij)}e_{kl}~, ~\text{where} ~g_{kl}^{(ij)},h_{kl}^{(ij)} \in C.
\end{equation}

Let $i\neq j$. Since $C$ is $2$-torsion free,
\begin{equation}
\label{8}
2(g(e_{ii})e_{jj}+e_{jj}h(e_{ii}))=f(e_{ii}\circ e_{jj})=0 \implies g(e_{ii})e_{jj}+e_{jj}h(e_{ii})=0.
\end{equation}

Comparing the coefficients of $e_{i1},\dots,e_{in},e_{j1},\dots,e_{jn}$ from \eqref{8},

\begin{equation}
\label{9}
g_{i1}^{(jj)}=\dots=g_{in}^{(jj)}=0=h_{j1}^{(ii)}=\dots=h_{jn}^{(ii)}.
\end{equation}

Similarly, from $h(e_{ii})e_{jj}+e_{jj}g(e_{ii})=0$,
\begin{equation}
\label{10}
h_{i1}^{(jj)}=\dots=h_{in}^{(jj)}=0=g_{j1}^{(ii)}=\dots=g_{jn}^{(ii)}.
\end{equation}

Now, $e_{ij}=e_{ii}\circ e_{ij}$. Using \eqref{9} and \eqref{10},
\begin{equation}
\label{11}
f(e_{ij})=2(g_{i1}^{(ij)}e_{i1}+\dots+g_{in}^{(ij)}e_{in})=2(h_{i1}^{(ij)}e_{i1}+\dots+h_{in}^{(ij)}e_{in}).
\end{equation}

In a similar way, we have
\begin{equation}
\label{12}
\begin{aligned}
f(e_{ij})&=2(g_{j1}^{(jj)}e_{i1}+\dots+g_{jn}^{(jj)}e_{in}+h_{j1}^{(ij)}e_{j1}+\dots+h_{jn}^{(ij)}e_{jn})\\
&=2(h_{j1}^{(jj)}e_{i1}+\dots+g_{jn}^{(jj)}e_{in}+h_{j1}^{(ij)}e_{j1}+\dots+h_{jn}^{(ij)}e_{jn}).
\end{aligned}
\end{equation}

From \eqref{12},
\begin{equation}
\label{13}
g_{j1}^{(jj)}=h_{j1}^{(jj)},\dots,g_{jn}^{(jj)}=h_{jn}^{(jj)}, ~\text{for all}~ j=1,\dots,n.
\end{equation}

From \eqref{9},\eqref{10} and \eqref{13},
\begin{equation}
\tag{A1} \label{A1}
g(e_{jj})=h(e_{jj})~\text{for all}~ j=1,\dots,n.
\end{equation}

From \eqref{11},
\begin{equation}
\label{14}
g_{i1}^{(ij)}=h_{i1}^{(ij)},\dots,g_{in}^{(ij)}=h_{in}^{(ij)}~\text{for all}~ i\neq j.
\end{equation}

From \eqref{12},
\begin{equation}
\label{15}
g_{j1}^{(ij)}=h_{j1}^{(ij)},\dots,g_{jn}^{(ij)}=h_{jn}^{(ij)}~\text{for all}~ i\neq j.
\end{equation}

Now, let $k\neq i$ and $k\neq j$. From $e_{kk}\circ e_{ij}=0$,
\begin{equation}
\label{16}
g_{k1}^{(ij)}=\dots=g_{kn}^{(ij)}=0=h_{k1}^{(ij)}=\dots=h_{kn}^{(ij)}.
\end{equation}

From \eqref{14}, \eqref{15} and \eqref{16},
\begin{equation}
\tag{A2} \label{A2}
g(e_{ij})=h(e_{ij})~\text{for all}~ i\neq j.
\end{equation}

Since $g$ and $h$ are linear maps, using \eqref{A1} and \eqref{A2}, $g(X)=h(X)$, for all $X\in \mathcal{M}_n(C)$.
\end{proof}

Now, we give an example of a Jordan left $\{g, g\}$-derivation over $\mathcal{M}_2(C)$ which is not a left  $\{g, g\}$-derivation.
\begin{example}
\label{ex4}
Let $A=\mathcal{M}_2(C)$ and $X= \begin{bmatrix}
x_1 & x_2 \\
x_3 & x_4 \end{bmatrix}\in A$. We consider $f,g:A\rightarrow A$ defined as\\ $f(X)= \begin{bmatrix}
2x_1+6x_2 & 4x_1+8x_2 \\
2x_3+6x_4 & 4x_3+8x_4 \end{bmatrix}$ and $g(X)= \begin{bmatrix}
x_1+3x_2 & 2x_1+4x_2 \\
x_3+3x_4 & 2x_3+4x_4 \end{bmatrix}$ respectively. Then $f$ is a Jordan left $\{g, g\}$-derivation. Now, $f(e_{12}e_{11})=f(0)=0$, but $e_{12}g(e_{11})+e_{11}g(e_{12})=3e_{11}+4e_{12}\neq 0$. Therefore, $f$ is not a left $\{g, g\}$-derivation.
\end{example}

\begin{remark}
It is known that if $C$ is a prime (semiprime) ring, then $\mathcal{M}_n(C)$ is a prime (semiprime) ring, for $n\geq 2$. In Example \ref{ex4}, if $C$ is a commutative prime (semiprime) ring, then $\mathcal{M}_2(C)$ becomes a prime (semiprime) algebra over $C$. Therefore, Example \ref{ex4} shows that every Jordan left $\{g, h\}$-derivation over a prime (semiprime) algebra need not be a left $\{g, h\}$-derivation.
\end{remark}

Also, we characterize Jordan left $\{g, g\}$-derivation and left $\{g, g\}$-derivation over $\mathcal{M}_n(C)$.

\begin{theorem}
Let $\mathcal{M}_n(C)$, $n\geq 2$, be the algebra of $n\times n$ full matrices over $C$. Then $f$ is a Jordan left $\{g, g\}$-derivation over $\mathcal{M}_n(C)$ if and only if there exists $n^2$ elements in $C$ such that
\begin{align*}
& g(A)=\sum_{j=1}^{n}\sum_{i=1}^{n}(\sum_{k=1}^{n}a_{ik}g_{kj}^{(kk)})e_{ij},\\
& f(A)=2g(A),~\text{for all}~A\in \mathcal{T}_n(C) ~\text{where,}~g_{lm}^{(np)}\in C,~ A=\sum_{j=1}^{n}\sum_{i=1}^{n} a_{ij}e_{ij}.
\end{align*}

Moreover, if $f$ is a Jordan left $\{g, g\}$-derivation over $\mathcal{M}_n(C)$, then $f$ and $g$ are right centralizers.
\end{theorem}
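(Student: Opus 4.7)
The plan is to mimic the structure of the previous $\mathcal{T}_n(C)$ theorem, exploiting the fact that the preceding theorem has already forced $g=h$, so the Jordan identity becomes $f(a\circ b)=2(ag(b)+bg(a))$ and we need only determine $g$. Accordingly, I would begin by expanding $g(e_{ij})=\sum_{k,l}g_{kl}^{(ij)}e_{kl}$ in the basis and then systematically extract the coefficients by substituting carefully chosen matrix units into the Jordan identity.

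The first key step is to apply the identity $e_{ii}\circ e_{jj}=0$ for $i\neq j$, which yields $e_{ii}g(e_{jj})+e_{jj}g(e_{ii})=0$. Since $e_{ii}g(e_{jj})$ lives in row $i$ and $e_{jj}g(e_{ii})$ lives in row $j$, they are independent, forcing $g_{il}^{(jj)}=0$ whenever $i\neq j$. Thus $g(e_{jj})$ has support only in row $j$. Next, from $e_{ii}\circ e_{ii}=2e_{ii}$ I would deduce $f(e_{ii})=2e_{ii}g(e_{ii})=2g(e_{ii})$. Then, plugging the identities $e_{ii}\circ e_{ij}=e_{ij}$ and $e_{jj}\circ e_{ij}=e_{ij}$ into the Jordan formula and comparing both expressions for $f(e_{ij})$, I obtain on the one hand $f(e_{ij})=2\sum_l g_{il}^{(ij)}e_{il}$ (row $i$ only), and on the other hand the coefficient match $g_{il}^{(ij)}=g_{jl}^{(jj)}$ together with $g_{jl}^{(ij)}=0$. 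Finally, $e_{kk}\circ e_{ij}=0$ for $k\neq i,j$ eliminates the remaining rows, so $g(e_{ij})=\sum_l g_{jl}^{(jj)}e_{il}$ and $f(e_{ij})=2g(e_{ij})$ for every $i,j$.

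Linearity then gives, for $A=\sum_{i,j}a_{ij}e_{ij}$,
\[
g(A)=\sum_{i,j}\Bigl(\sum_{k=1}^{n}a_{ik}g_{kj}^{(kk)}\Bigr)e_{ij},\qquad f(A)=2g(A),
\]
with exactly the $n^2$ parameters $\{g_{kj}^{(kk)}\}_{k,j}$ as claimed. Setting $\alpha=\sum_{k,j}g_{kj}^{(kk)}e_{kj}\in\mathcal{M}_n(C)$, a direct check shows $g(A)=A\alpha$, so by the remark on right centralizers in the introduction, both $g$ and $f=2g$ are right centralizers (given by multiplication on the right by $\alpha$ and $2\alpha$ respectively). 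For the converse direction, starting from maps $g$, $f$ of the prescribed form I would verify the Jordan identity $f(AB+BA)=2(Ag(B)+Bg(A))$ by a direct computation, using $g(X)=X\alpha$: since $f(AB+BA)=2(AB+BA)\alpha=2(AB\alpha+BA\alpha)=2(Ag(B)+Bg(A))$, this is immediate.

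I do not expect a serious obstacle: the main technical work is simply bookkeeping of which $g_{kl}^{(ij)}$ must vanish, and the previous theorem eliminates the $h$ side entirely. The only mildly delicate point is confirming, once the form of $g$ is known, that it genuinely factors as right multiplication by a single element $\alpha$, which is what lets us conclude the right-centralizer statement essentially for free rather than re-doing the computation in each case.
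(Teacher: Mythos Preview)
Your proposal is correct and follows essentially the same route as the paper: expand $g(e_{ij})$ in the matrix-unit basis, use the Jordan identities on $e_{ii}\circ e_{ij}$, $e_{jj}\circ e_{ij}$, and $e_{kk}\circ e_{ij}=0$ to pin down the coefficients, and then recognise $g(A)=A\alpha$ with $\alpha=\sum_{k,j}g_{kj}^{(kk)}e_{kj}$. The only cosmetic difference is ordering: you first apply $e_{ii}\circ e_{jj}=0$ to force $g(e_{jj})$ into row $j$, which lets you drop the $e_{ij}g(e_{ii})$ term immediately, whereas the paper carries that term and extracts the same vanishing later from $e_{ij}\circ e_{kk}=0$; the net computations coincide. (Your appeal to the preceding $g=h$ theorem is unnecessary here, since the statement already assumes $h=g$, but it does no harm.)
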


\begin{proof}
Let $f$ be a Jordan left $\{g, g\}$-derivation over $\mathcal{M}_n(C)$. Now, let $g$ be of the form \eqref{6} and
\begin{equation}
\label{n32}
f(e_{ij})=\sum_{k=1}^{n}\sum_{l=1}^{n}f_{kl}^{(ij)}e_{kl}~, ~\text{where} ~f_{kl}^{(ij)} \in C.
\end{equation}

Let $i\in \{1,2,\dots,n\}$. Since $e_{ii}^2=e_{ii}$ and $f$ is a Jordan left $\{g, g\}$-derivation over $\mathcal{M}_n(C)$,
\begin{equation}
\label{n33}
\begin{aligned}
f(e_{ii})=2\sum_{k=1}^{n}g_{ik}^{(ii)}e_{ik}.
\end{aligned}
\end{equation}

Let $i\neq j$. Then from $e_{ij}=e_{ii}\circ e_{ij}$,
\begin{equation}
\label{n34}
\begin{aligned}
f(e_{ij})=2\sum_{k=1}^{n}(g_{ik}^{(ij)}+g_{jk}^{(ii)})e_{ik}.
\end{aligned}
\end{equation}

Similarly, as $e_{ij}=e_{ij}\circ e_{jj}$,
\begin{equation}
\label{n35}
\begin{aligned}
f(e_{ij})=2(\sum_{k=1}^{n}g_{jk}^{(jj)}e_{ik}+\sum_{k=1}^{n}g_{jk}^{(ij)}e_{jk}).
\end{aligned}
\end{equation}

Now, by \eqref{n34} and \eqref{n35},
\begin{equation}
\label{n36}
\begin{aligned}
g_{jk}^{(ij)}=0, ~\text{for}~k=1,2,\dots,n.
\end{aligned}
\end{equation}

Let $k\neq i,j$. Since $0=e_{ij}\circ e_{kk}$,
\begin{equation}
\label{n37}
\begin{aligned}
g_{kl}^{(ij)}=0=g_{jl}^{(kk)},~\text{for}~l=1,2,\dots,n.
\end{aligned}
\end{equation}

Now, by \eqref{n34}, \eqref{n35} and \eqref{n37},
\begin{equation}
\label{n38}
\begin{aligned}
g_{ik}^{(ij)}=g_{jk}^{(jj)},~\text{for}~k=1,2,\dots,n.
\end{aligned}
\end{equation}
Therefore, for $i\neq j$, by using \eqref{n35}-\eqref{n38},
\begin{equation}
\label{n39}
\begin{aligned}
 f(e_{ij})=2\sum_{k=1}^{n}g_{jk}^{(jj)}e_{ik},~
 g(e_{ij})=\sum_{k=1}^{n}g_{jk}^{(jj)}e_{ik},~
 g(e_{ii})=\sum_{k=1}^{n}g_{ik}^{(ii)}e_{ik}.
\end{aligned}
\end{equation}
Let $A\in\mathcal{M}_n(C)$. Then $A=\sum_{i=1}^{n}\sum_{j=1}^{n}a_{ij}e_{ij}$, where $a_{ij}\in C$. Hence, by \eqref{n33} and \eqref{n39},
\begin{equation}
\label{n40}
\begin{aligned}
& g(A)=\sum_{j=1}^{n}\sum_{i=1}^{n}(\sum_{k=1}^{n}a_{ik}g_{kj}^{(kk)})e_{ij},\\
& f(A)=2g(A).
\end{aligned}
\end{equation}
Also, the required number of elements from $C$ to express $g$ and $f$ is $n+n+\dots+n~(n~\text{times})=n^2$.\\
Conversely, let $g$ and $f$ be of the form \eqref{n40}, for all $A\in\mathcal{M}_n(C)$, where $A=\sum_{i=1}^{n}\sum_{j=1}^{n}a_{ij}e_{ij}$ and $a_{ij}\in C$. Then $g(A)=A\alpha$ and $f(A)=2A\alpha$, where $\alpha=\sum_{i=1}^{n}\sum_{j=1}^{n}g_{ij}^{(ii)}e_{ij}$. Now, let $B\in\mathcal{M}_n(C)$. Then $B=\sum\limits_{1 \leq i \leq j \leq n}b_{ij}e_{ij}$, where $b_{ij}\in C$. Thus, by direct computation
\begin{align*}
f(AB+BA)=2(Ag(B)+Bg(A)).
\end{align*}

The last part can easily be derived from the converse part.

\end{proof}

\begin{theorem}
Let $\mathcal{M}_n(C)$, $n\geq 2$, be the algebra of $n\times n$ full matrices over $C$. Then $f$ is a left $\{g, g\}$-derivation over $\mathcal{M}_n(C)$ if and only if $f=g=0$.
\end{theorem}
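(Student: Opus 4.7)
The plan is to reduce to the characterization of Jordan left $\{g,g\}$-derivations from the previous theorem and then exploit the extra structure imposed by the left $\{g,g\}$-derivation identity.

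First, every left $\{g,g\}$-derivation is automatically a Jordan left $\{g,g\}$-derivation, so by the preceding theorem I may assume that there is a matrix $\alpha=\sum_{i,j}g_{ij}^{(ii)}e_{ij}\in\mathcal{M}_n(C)$ with $g(A)=A\alpha$ and $f(A)=2A\alpha$ for all $A\in\mathcal{M}_n(C)$. This already encodes all the constraints coming from the Jordan identity; what remains is to see what the full (non-Jordan) identity \eqref{1} adds.

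Next, I would substitute these forms into $f(AB)=Ag(B)+Bg(A)$. The left side is $2AB\alpha$ while the right side is $AB\alpha+BA\alpha$, so the identity collapses to
\begin{equation*}
(AB-BA)\alpha=0 \quad \text{for all } A,B\in\mathcal{M}_n(C).
\end{equation*}
The core step is then to show that no nonzero $\alpha$ can be right-annihilated by every commutator. For this I would take $A=e_{ij}$ and $B=e_{jk}$ with $i\neq k$; since $e_{ij}e_{jk}=e_{ik}$ and $e_{jk}e_{ij}=0$, this yields $e_{ik}\alpha=0$ for all $i\neq k$. Writing $\alpha=\sum_{m,p}\alpha_{mp}e_{mp}$ and computing $e_{ik}\alpha=\sum_{p}\alpha_{kp}e_{ip}$, linear independence of the $e_{ip}$ forces $\alpha_{kp}=0$ for every $p$ and every index $k$ admitting some $i\neq k$. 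Because $n\geq 2$ this covers all $k$, so $\alpha=0$ and consequently $g=0$ and $f=0$.

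The converse is immediate: the zero map trivially satisfies \eqref{1} with $g=h=0$. There is essentially no obstacle in this argument beyond the linear-algebra step extracting $\alpha=0$ from $e_{ik}\alpha=0$, which works precisely because the matrix units span all rows when $n\geq 2$; the hypothesis $n\geq 2$ is used exactly here (for $n=1$ the algebra is commutative and the obstruction disappears).
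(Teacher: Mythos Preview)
Your proof is correct and follows essentially the same strategy as the paper: invoke the Jordan characterization to write $g(A)=A\alpha$, $f(A)=2A\alpha$, and then use matrix-unit computations to force $\alpha=0$. The paper extracts the vanishing of each row of $\alpha$ from $f(e_{ii}e_{ji})=0$ directly, whereas you first pass through the clean general identity $(AB-BA)\alpha=0$ and then specialize to $A=e_{ij}$, $B=e_{jk}$; the resulting computations are the same.
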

\begin{proof}
Let $f$ be a left $\{g, g\}$-derivation over $\mathcal{M}_n(C)$ where $g$ and $f$ be of the form \eqref{6} and \eqref{n32} respectively. Since every left $\{g, g\}$-derivation over $\mathcal{M}_n(C)$ is a Jordan left $\{g, g\}$-derivation, therefore, \eqref{n33} and \eqref{n40} hold.

Let $i\neq j$. Since $e_{ij}=e_{ii}e_{ij}$,
\begin{equation}
\label{n41}
\begin{aligned}
f(e_{ij})=e_{ii}g(e_{ij})+e_{ij}g(e_{ii})=f(e_{ij} e_{ii})=0.
\end{aligned}
\end{equation}

Now, since $e_{ii}e_{ji}=0$,
\begin{equation}
\label{n42}
\begin{aligned}
g_{ik}^{(ii)}=0,~\text{for}~k=1,2,\dots,n.
\end{aligned}
\end{equation}

By \eqref{n33} and \eqref{n40}-\eqref{n42}, $f=g=0$.

\end{proof}
The next example is of a Jordan left $\{g, g\}$-derivation $f$ on $\mathcal{M}_n(C)$ where $g$ and $f$ are not left centralizers.
\begin{example}
Let $A= \begin{bmatrix}
a_{11} & a_{12} \\
a_{21} & a_{22} \end{bmatrix}\in \mathcal{M}_2(C)$ and $g,f:\mathcal{M}_2(C)\rightarrow \mathcal{M}_2(C)$ are defined by $g(A)= \begin{bmatrix}
a_{11}-a_{12} & -a_{11} \\
a_{21}-a_{22} & -a_{21} \end{bmatrix}$ and $f(A)= 2g(A)$ respectively. Then $f$ is a Jordan left $\{g, g\}$-derivation on $\mathcal{M}_2(C)$. In this case, for $A=\begin{bmatrix}
1 & 2 \\
3 & 4 \end{bmatrix}$ and $B= \begin{bmatrix}
5 & 6 \\
7 & 8 \end{bmatrix}$, $g(AB)\neq g(A)B$ and $f(AB)\neq f(A)B$. Therefore, $g$ and $f$ are not left centralizers.
\end{example}

\section{Jordan Left $\{\lowercase{g}, \lowercase{h}\}$-Derivation on Tensor Products of Algebras}
Now, we discuss a result on Jordan left $\{g, h\}$-derivation over tensor products of algebras. Let $A,B$ and $D$ be algebras over a field $\mathbb{F}$. Then a map $\beta:A\times B\rightarrow D$ is said to be $\mathbb{F}$-bilinear if $\beta(a_1+a_2,b)=\beta(a_1,b)+\beta(a_2,b)$, $\beta(a,b_1+b_2)=\beta(a,b_1)+\beta(a,b_2)$ and $\beta(ra,b)=\beta(a,rb)=r\beta(a,b)$, for all $a,a_1   ,a_2\in A$, $b,b_1,b_2\in B$ and $r\in \mathbb{F}$. A tensor product of two algebras $A$ and $B$ is an $\mathbb{F}$-algebra $A\otimes B$, together with an $\mathbb{F}$-bilinear map $\tau: A\times B \rightarrow A\otimes B$ such that for any $\mathbb{F}$-algebra $D$ and any $\mathbb{F}$-bilinear map $\beta:A\times B \rightarrow D$ there exists a unique $\mathbb{F}$-linear map $f:A\otimes B\rightarrow D$ such that $f\circ \tau=\beta$. For $a\in A, b\in B$ the image $\tau(a,b)$ is denoted by $a\otimes b$.

\begin{theorem}
If an algebra $A$ over a field $\mathbb{F}$ with char$(\mathbb{F})\neq 2$, has the property that every Jordan left $\{g, h\}$-derivation of $A$ is a left $\{g, h\}$-derivation, then the algebra $A\otimes S$ has the same property where $S$ is a commutative algebra over $\mathbb{F}$.
\end{theorem}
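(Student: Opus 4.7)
My plan is to transfer the problem from $A\otimes S$ down to $A$ by decomposing $F$, $G$, $H$ with respect to a vector-space basis of $S$, extracting from this a family of Jordan left $\{g,h\}$-derivations of $A$, applying the hypothesis on $A$, and reassembling the conclusion to recover the left $\{G,H\}$-derivation identity on $A\otimes S$.

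To execute this, let $F$ be a Jordan left $\{G,H\}$-derivation of $A\otimes S$. Fix an $\mathbb{F}$-basis $\{e_i\}_{i\in I}$ of $S$ and write $e_ie_j=\sum_l c_{ij}^l e_l$, noting $c_{ij}^l=c_{ji}^l$ by commutativity of $S$. Introduce $\mathbb{F}$-linear component maps $F_p^j,G_p^j,H_p^j:A\to A$ via
\[
F(a\otimes e_j)=\sum_p F_p^j(a)\otimes e_p,\quad G(a\otimes e_j)=\sum_p G_p^j(a)\otimes e_p,\quad H(a\otimes e_j)=\sum_p H_p^j(a)\otimes e_p.
\]
Applying the Jordan condition to $u=a\otimes e_i$, $v=b\otimes e_j$, using the key identity $(a\otimes e_i)\circ(b\otimes e_j)=(a\circ b)\otimes e_ie_j$ (which rests on commutativity of $S$), and matching the coefficient of $e_q$ on each side, yields
\[
\sum_l c_{ij}^l F_q^l(a\circ b)=2\Bigl(\sum_p c_{ip}^q\,a\,G_p^j(b)+\sum_p c_{jp}^q\,b\,H_p^i(a)\Bigr).
\]
Setting $\widetilde{F}_{ij}^{\,q}(x)=\sum_l c_{ij}^l F_q^l(x)$, $\widetilde{G}_{ij}^{\,q}(x)=\sum_p c_{ip}^q G_p^j(x)$, and $\widetilde{H}_{ij}^{\,q}(x)=\sum_p c_{jp}^q H_p^i(x)$, this says precisely that $\widetilde{F}_{ij}^{\,q}:A\to A$ is a Jordan left $\{\widetilde{G}_{ij}^{\,q},\widetilde{H}_{ij}^{\,q}\}$-derivation of $A$.

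The hypothesis on $A$ then promotes each $\widetilde{F}_{ij}^{\,q}$ to a genuine left $\{\widetilde{G}_{ij}^{\,q},\widetilde{H}_{ij}^{\,q}\}$-derivation; in particular $\widetilde{F}_{ij}^{\,q}(ab)=a\widetilde{G}_{ij}^{\,q}(b)+b\widetilde{H}_{ij}^{\,q}(a)$. Multiplying by $e_q$, summing over $q$, and using $\sum_q c_{ip}^q e_q=e_ie_p$ to collapse the structure constants back reconstructs $F((a\otimes e_i)(b\otimes e_j))=(a\otimes e_i)G(b\otimes e_j)+(b\otimes e_j)H(a\otimes e_i)$, and bilinearity of $F$, $G$, $H$ extends this to $F(uv)=uG(v)+vH(u)$ for all $u,v\in A\otimes S$. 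The companion identity $F(uv)=uH(v)+vG(u)$ comes for free: since $F$ is a Jordan left $\{G,H\}$-derivation and $u\circ v=v\circ u$, the two formulas $F(u\circ v)=2(uG(v)+vH(u))$ and $F(u\circ v)=2(uH(v)+vG(u))$ coincide, so $uG(v)+vH(u)=uH(v)+vG(u)$ (cancelling the factor $2$ uses $\mathrm{char}(\mathbb{F})\neq 2$).

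The main obstacle I anticipate is purely notational bookkeeping: the Jordan condition at the component level scatters the structure constants into three distinct positions, and care is needed to package them into the ``tilde'' maps so that the resulting identity on $A$ matches the Jordan left $\{g,h\}$-derivation shape of the hypothesis exactly. Commutativity of $S$ enters at a single decisive point---ensuring $(a\otimes s)\circ(b\otimes t)=(a\circ b)\otimes st$---and this is precisely what keeps $\widetilde{G}_{ij}^{\,q}$ and $\widetilde{H}_{ij}^{\,q}$ balanced on the two sides of the extracted identity over $A$.
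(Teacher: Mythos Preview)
Your proposal is correct and follows essentially the same strategy as the paper: fix a basis of $S$, decompose $F$, $G$, $H$ into $A$-valued components, use commutativity of $S$ to read off a Jordan left $\{\widetilde g,\widetilde h\}$-derivation on $A$ for each choice of indices, apply the hypothesis, and reassemble. The only cosmetic differences are that the paper works with arbitrary $r,s\in S$ rather than basis elements (equivalent by linearity), and that the paper obtains the companion identity $F(uv)=uH(v)+vG(u)$ by repeating the argument with roles swapped, whereas you derive it more efficiently from the symmetry $u\circ v=v\circ u$ already available at the Jordan level.
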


\begin{proof}
Let $f$ be a Jordan left $\{g, h\}$-derivation on $A\otimes S$. Let $\{b_t\mid t\in T, ~ T ~ an~ index~ set\}$ be a basis of $S$ and $u\in A\otimes S$. Then
\begin{equation}
\label{17}
f(u)=\sum\limits_{t\in T} f_t(u)\otimes b_t, ~g(u)=\sum\limits_{t\in T} g_t(u)\otimes b_t ~\text{and}~ h(u)=\sum\limits_{t\in T} h_t(u)\otimes b_t
\end{equation}

where $f_t(u)=g_t(u)=h_t(u)=0$ for all but finitely many $t\in T$.

Now, let $x,y\in A$ and $r,s\in S$. Since $f$ is a Jordan left $\{g, h\}$-derivation on $A\otimes S$,
\begin{equation}
\label{18}
\begin{aligned}
& f((xy+yx)\otimes rs)\\
&=f((x\otimes r)(y\otimes s)+(y\otimes s)(x\otimes r))=2[(x\otimes r)g(y\otimes s)+(y\otimes s)h(x\otimes r)],\\
&\text{which implies,} \sum\limits_{t\in T}f_t((xy+yx)\otimes rs)\otimes b_t\\
&=2[(x\otimes r)(\sum\limits_{w\in T}g_w(y\otimes s)\otimes b_w)+(y\otimes s)(\sum\limits_{w\in T}h_w(x\otimes r)\otimes b_w)]\\
&=2[(\sum\limits_{w\in T}xg_w(y\otimes s))\otimes b_wr + (\sum\limits_{w\in T}yh_w(x\otimes r))\otimes b_ws]
\end{aligned}
\end{equation}
where, $b_wr=\sum\limits_{t\in T}\alpha_{tw}b_t$, $b_ws=\sum\limits_{t\in T}\beta_{tw}b_t$ and $\alpha_{tw}, \beta_{tw}\in \mathbb{F}$.

\begin{equation}
\label{19}
\begin{aligned}
&\text{R.H.S. of \eqref{18}}=2[\sum\limits_{t\in T}x(\sum\limits_{w\in T}\alpha_{tw}g_w(y\otimes s))\otimes b_t] \\
&+ [\sum\limits_{t\in T}y(\sum\limits_{w\in T}\beta_{tw}h_w(x\otimes r))\otimes b_t]\\
&\implies f_t((xy+yx)\otimes rs)=2[x(\sum\limits_{w\in T}\alpha_{tw}g_w(y\otimes s)) + y(\sum\limits_{w\in T}\beta_{tw}h_w(x\otimes r))].
\end{aligned}
\end{equation}

Let $\widetilde{f}(x)=f_t(x\otimes rs)$, $\widetilde{g}(y)=\sum\limits_{w\in T}\alpha_{tw}g_w(y\otimes s)$ and $\widetilde{h}(x)=\sum\limits_{w\in T}\beta_{tw}h_w(x\otimes r)$, for all $x,y\in A$. Then, by \eqref{19} $\widetilde{f}$ is a Jordan left $\{\widetilde{g},\widetilde{h}\}$-derivation on $A$. So, $\widetilde{f}$ is a left $\{\widetilde{g},\widetilde{h}\}$-derivation, by assumption. So,
\begin{equation}
\label{20}
f_t(xy\otimes rs)=x(\sum\limits_{w\in T}\alpha_{tw}g_w(y\otimes s))+y(\sum\limits_{w\in T}\beta_{tw}h_w(x\otimes r)).
\end{equation}

Also,
\begin{equation}
\label{21}
\begin{aligned}
&f((x\otimes r)(y\otimes s))=f(xy\otimes rs)=\sum\limits_{t\in T}f_t(xy\otimes rs)\otimes b_t \\
&=\sum\limits_{t\in T}[x(\sum\limits_{w\in T}\alpha_{tw}g_w(y\otimes s))+y(\sum\limits_{w\in T}\beta_{tw}h_w(x\otimes r))] ~~\text{(by \eqref{20})} \\
&=x(\sum\limits_{w\in T}g_w(y\otimes s))\otimes(\sum\limits_{t\in T}\alpha_{tw}b_t)+y(\sum\limits_{w\in T}h_w(x\otimes r))\otimes(\sum\limits_{t\in T}\beta_{tw}b_t)\\
&=\sum\limits_{w\in T}xg_w(y\otimes s)\otimes b_wr + \sum\limits_{w\in T}yh_w(x\otimes r)\otimes b_ws \\
&=(x\otimes r)(\sum\limits_{w\in T}g_w(y\otimes s)\otimes b_w) + (y\otimes s)(\sum\limits_{w\in T}h_w(x\otimes r)\otimes b_w) \\
&\text{(since S is commutative)}\\
&=(x\otimes r)g(y\otimes s)+(y\otimes s)h(x\otimes r).
\end{aligned}
\end{equation}
Therefore, $f(uv)=ug(v)+vh(u)$ for all $u,v \in A\otimes S$. Similarly, $f(uv)=uh(v)+vg(u)$ for all $u,v \in A\otimes S$. Hence $f$ is a left $\{g,h\}$-derivation on $A\otimes S$.
\end{proof}

\section{Jordan Left $\{\lowercase{g}, \lowercase{h}\}$-Derivation on Algebra of polynomials}
Let $A$ be an algebra over $C$ and $A[x]$, the ring of polynomials over $A$. Then $A[x]$ becomes an algebra over $C$, where scalar multiplication is defined as $\alpha(a_0+a_1x+a_2x^2+\dots+a_rx^r)=\alpha a_0+(\alpha a_1)x+(\alpha a_2)x^2+\dots+(\alpha a_r)x^r$, $\alpha \in C$ and $a_0,a_1,a_2,\dots,a_r \in A$. First, we derive a result on left $\{g,h\}$-derivation on $A[x]$.
\begin{theorem}
\label{thm4.1}
If $f$ is left $\{g,h\}$-derivation on $A$, then $\widetilde{f}$ is a left $\{\widetilde{g},\widetilde{h}\}$-derivation, where $\widetilde{F}(\sum_{j=0}^{r}a_j x^j)=\sum_{j=0}^{r}F(a_j) x^j$, for $F=f,g,h$ and $a_j\in A$.
\end{theorem}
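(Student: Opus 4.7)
The plan is to verify the defining identity \eqref{1} for a left $\{\widetilde{g},\widetilde{h}\}$-derivation by a direct coefficient-wise computation, reducing everything to the corresponding identity for $f$ on $A$. First I would observe that for each $F\in\{f,g,h\}$, the extension $\widetilde{F}$ is $C$-linear: this is immediate from the coefficient-wise definition together with linearity of $F$ and the definition of scalar multiplication on $A[x]$. By linearity it suffices to verify the two required identities on a generic pair of polynomials $p=\sum_{i=0}^{r} a_i x^i$ and $q=\sum_{j=0}^{s} b_j x^j$ with $a_i,b_j\in A$.

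Next, I would compute $pq=\sum_{k=0}^{r+s}\bigl(\sum_{i+j=k} a_i b_j\bigr)x^k$, apply $\widetilde{f}$ using its definition, and then invoke linearity of $f$ to obtain
\begin{equation*}
\widetilde{f}(pq)=\sum_{k=0}^{r+s}\sum_{i+j=k} f(a_i b_j)\, x^k=\sum_{i,j} f(a_i b_j)\, x^{i+j}.
\end{equation*}
Since $f$ is a left $\{g,h\}$-derivation on $A$, each term satisfies $f(a_i b_j)=a_i g(b_j)+b_j h(a_i)$. Substituting and regrouping,
\begin{equation*}
\widetilde{f}(pq)=\sum_{i,j} a_i g(b_j)\, x^{i+j}+\sum_{i,j} b_j h(a_i)\, x^{i+j}.
\end{equation*}
Because $x$ is central in $A[x]$, the first double sum equals $\bigl(\sum_i a_i x^i\bigr)\bigl(\sum_j g(b_j) x^j\bigr)=p\,\widetilde{g}(q)$ and the second equals $q\,\widetilde{h}(p)$. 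Repeating the computation using the dual expression $f(a_i b_j)=a_i h(b_j)+b_j g(a_i)$ gives $\widetilde{f}(pq)=p\,\widetilde{h}(q)+q\,\widetilde{g}(p)$, which finishes the proof.

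The main obstacle is essentially bookkeeping: one must carefully use that $x$ commutes with every element of $A$ in $A[x]$ so that the rearrangement $\sum_{i,j}a_i g(b_j) x^{i+j}=\bigl(\sum_i a_i x^i\bigr)\bigl(\sum_j g(b_j) x^j\bigr)$ is justified. Beyond this, no new idea is required, and the argument transfers mechanically from $A$ to $A[x]$.
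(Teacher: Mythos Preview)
Your proposal is correct and follows essentially the same direct coefficient-wise computation as the paper's proof; the only cosmetic difference is that the paper starts from $P\widetilde{g}(Q)+Q\widetilde{h}(P)$ and simplifies toward $\widetilde{f}(PQ)$, whereas you start from $\widetilde{f}(pq)$ and simplify in the opposite direction.
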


\begin{proof}
Let $P$, $Q\in A[x]$. Then $P=\sum_{j=0}^{r}a_j x^j$ and $Q=\sum_{k=0}^{s}b_k x^k$, where $a_j, b_k \in A$. Therefore, $PQ=\sum_{n=0}^{r+s}c_n x^n$ where $c_n=\underset{j+k=n}{\sum_{k=0}^{s}\sum_{j=0}^{r}}a_j b_k$.

By definition of $\widetilde{f}$, $\widetilde{g}$, $\widetilde{h}$ and the assumption that $f$ is a left $\{g,h\}$-derivation on $A$,
\begin{align*}
&P\widetilde{g}(Q)+Q\widetilde{h}(P)\\
&=(\sum_{j=0}^{r}a_j x^j)[\sum_{k=0}^{s}g(b_k) x^k]+(\sum_{k=0}^{s}b_k x^k)[\sum_{j=0}^{r}h(a_j) x^j]\\
&=\sum_{n=0}^{r+s}[\underset{j+k=n}{\sum_{k=0}^{s}\sum_{j=0}^{r}}a_j
g(b_k)]x^n+\sum_{n=0}^{r+s}[\underset{j+k=n}{\sum_{k=0}^{s}\sum_{j=0}^{r}}b_k h(a_j)]x^n\\
&=\sum_{n=0}^{r+s}[\underset{j+k=n}{\sum_{k=0}^{s}\sum_{j=0}^{r}}(a_j
g(b_k)+b_k h(a_j))]x^n\\
&=\sum_{n=0}^{r+s}[\underset{j+k=n}{\sum_{k=0}^{s}\sum_{j=0}^{r}}f(a_j b_k)]x^n\\
&=\widetilde{f}[\sum_{n=0}^{r+s}(\underset{j+k=n}{\sum_{k=0}^{s}\sum_{j=0}^{r}}a_j b_k)x^n]\\
&=\widetilde{f}(PQ).
\end{align*}

Similarly, $\widetilde{f}(PQ)=P\widetilde{h}(Q)+Q\widetilde{g}(P)$. Thus, $\widetilde{f}$ is a left $\{\widetilde{g},\widetilde{h}\}$-derivation on $A[x]$.
\end{proof}

Our next result characterizes Jordan left $\{g,h\}$-derivation on $A[x]$.
\begin{theorem}
\label{thm4.2}
If $f$ is Jordan left $\{g,h\}$-derivation on $A$, then $\widetilde{f}$ is a Jordan left $\{\widetilde{g},\widetilde{h}\}$-derivation, where $\widetilde{F}(\sum_{j=0}^{r}a_j x^j)=\sum_{j=0}^{r}F(a_j) x^j$, for $F=f,g,h$ and $a_j\in A$.
\end{theorem}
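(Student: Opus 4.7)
The plan is to imitate the proof of Theorem~\ref{thm4.1} almost verbatim, replacing the ordinary product $PQ$ by the Jordan product $P\circ Q=PQ+QP$ and replacing the defining identity of a left $\{g,h\}$-derivation with the Jordan identity $f(ab+ba)=2(ag(b)+bh(a))$. So I would take arbitrary elements $P=\sum_{j=0}^{r}a_j x^j$ and $Q=\sum_{k=0}^{s}b_k x^k$ in $A[x]$, and expand
\[
P\circ Q=PQ+QP=\sum_{n=0}^{r+s}\Bigl(\underset{j+k=n}{\sum_{k=0}^{s}\sum_{j=0}^{r}}(a_j b_k+b_k a_j)\Bigr)x^n.
\]

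Next I would apply $\widetilde{f}$ to this expansion. By its definition $\widetilde{f}$ acts coefficient-wise, so I can pull $f$ inside each inner sum and then invoke the hypothesis that $f$ is a Jordan left $\{g,h\}$-derivation on $A$, which for each pair $(a_j,b_k)\in A\times A$ yields $f(a_j b_k+b_k a_j)=2(a_j g(b_k)+b_k h(a_j))$. This converts $\widetilde{f}(P\circ Q)$ into
\[
\sum_{n=0}^{r+s}\Bigl(\underset{j+k=n}{\sum_{k=0}^{s}\sum_{j=0}^{r}}2\bigl(a_j g(b_k)+b_k h(a_j)\bigr)\Bigr)x^n.
\]

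Now I would recognize the right-hand side as $2\bigl(P\widetilde{g}(Q)+Q\widetilde{h}(P)\bigr)$, exactly as in the computation following \eqref{20} in the proof of Theorem~\ref{thm4.1}: since $x$ is central, multiplying $P$ by $\widetilde{g}(Q)=\sum_k g(b_k)x^k$ gives a double sum of $a_j g(b_k)x^{j+k}$, and likewise for $Q\widetilde{h}(P)$. Collecting like powers of $x$ matches the displayed expression term for term. The symmetric identity $\widetilde{f}(P\circ Q)=2\bigl(P\widetilde{h}(Q)+Q\widetilde{g}(P)\bigr)$ follows in the same way by using the other half of the Jordan identity $f(ab+ba)=2(ah(b)+bg(a))$ (valid by the remark just after Definition~2).

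There is no genuine obstacle here; the only bookkeeping point is that in $A[x]$ the indeterminate $x$ commutes with the coefficients, so one is free to reorder $a_j g(b_k)x^{j+k}$ and $b_k h(a_j)x^{j+k}$ when re-assembling the products $P\widetilde{g}(Q)$ and $Q\widetilde{h}(P)$. Once that is noted, the argument is entirely parallel to Theorem~\ref{thm4.1} and the result follows.
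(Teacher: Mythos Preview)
Your proposal is correct and follows essentially the same approach as the paper: both expand $PQ+QP$ as $\sum_n\bigl(\sum_{j+k=n}(a_jb_k+b_ka_j)\bigr)x^n$, apply the Jordan identity $f(a_jb_k+b_ka_j)=2(a_jg(b_k)+b_kh(a_j))$ termwise (using linearity of $f$), and regroup to obtain $2(P\widetilde{g}(Q)+Q\widetilde{h}(P))$. The only cosmetic difference is that the paper starts from $2(P\widetilde{g}(Q)+Q\widetilde{h}(P))$ and arrives at $\widetilde{f}(PQ+QP)$, whereas you compute in the opposite direction; also note that your cross-reference to \eqref{20} points to the tensor-product section rather than to Theorem~\ref{thm4.1}.
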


\begin{proof}
Let $P$, $Q\in A[x]$. Then $P=\sum_{j=0}^{r}a_j x^j$ and $Q=\sum_{k=0}^{s}b_k x^k$, where $a_j, b_k \in A$. Therefore, $PQ+QP=\sum_{n=0}^{r+s}d_n x^n$, where $d_n=\underset{j+k=n}{\sum_{k=0}^{s}\sum_{j=0}^{r}}(a_j b_k+b_k a_j)$.

By definition of $\widetilde{f}$, $\widetilde{g}$, $\widetilde{h}$ and the assumption that $f$ is a Jordan left $\{g,h\}$-derivation on $A$,
\begin{align*}
&2(P\widetilde{g}(Q)+Q\widetilde{h}(P))\\
&=\sum_{n=0}^{r+s}[\underset{j+k=n}{\sum_{k=0}^{s}\sum_{j=0}^{r}}2(a_j
g(b_k)+b_k h(a_j))]x^n\\
&=\sum_{n=0}^{r+s}[\underset{j+k=n}{\sum_{k=0}^{s}\sum_{j=0}^{r}}f(a_j b_k + b_k a_j)]x^n\\
&=\widetilde{f}(PQ+QP).
\end{align*}

Hence, $\widetilde{f}$ is a Jordan left $\{\widetilde{g},\widetilde{h}\}$-derivation on $A[x]$.
\end{proof}

\section{Jordan Left $\{\lowercase{g}, \lowercase{h}\}$-Derivation on Quaternion Algebra}
Let $\mathbb{H}_{\mathbb{R}}=\{a+bi+cj+dk\mid a,b,c,d\in \mathbb{R}, i^2=j^2=k^2=ijk=-1 \}$ be the quaternion algebra over real numbers.

\begin{proposition}
\label{thm3}
Let $f$ be a left $\{g,h\}$-derivation over $\mathbb{H}_{\mathbb{R}}$. Then the image of $f$ is the subalgebra of $\mathbb{H}_{\mathbb{R}}$ generated by $f(1)$.
\end{proposition}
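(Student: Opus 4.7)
The plan is to exploit the division algebra structure of $\mathbb{H}_{\mathbb{R}}$ to force $g = h$, then use the resulting symmetry to show $f$ vanishes on all commutators, and finally pin down $f(1)$ by a quadratic identity.

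First I would substitute $a=1$ (respectively $b=1$) into the defining identity $f(ab) = ag(b) + bh(a) = ah(b) + bg(a)$ to obtain $f(b) = g(b) + bh(1) = h(b) + bg(1)$, hence $g(b) - h(b) = b(g(1) - h(1))$ for every $b$. Equating the two right-hand sides $ag(b) + bh(a) = ah(b) + bg(a)$ and substituting this relation yields
\[
(ab - ba)\,(g(1) - h(1)) = 0 \quad \text{for all } a,b \in \mathbb{H}_{\mathbb{R}}.
\]
Choosing $a=i$, $b=j$ so that $ab - ba = 2k \neq 0$, and using that $\mathbb{H}_{\mathbb{R}}$ is a division algebra, I get $g(1) = h(1)$ and therefore $g = h$.

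Next, with $g = h$, the identity $f(ab) = ag(b) + bg(a)$ is symmetric in $a$ and $b$, so $f(ab) = f(ba)$ and $f$ annihilates every commutator $[a,b]$. Since $[j,k] = 2i$, $[k,i] = 2j$, and $[i,j] = 2k$, this yields $f(i) = f(j) = f(k) = 0$. By $\mathbb{R}$-linearity, $f(a_0 + a_1 i + a_2 j + a_3 k) = a_0 f(1)$, so the image of $f$ equals $\mathbb{R} f(1)$.

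Finally, I would pin down $f(1)$ using $f(a^2) = 2ag(a)$, which follows from $f(a^2) = a(g(a) + h(a))$ noted after the definition of Jordan left $\{g,h\}$-derivation. From $g(b) = f(b) - bg(1)$ and $g(1) = f(1)/2$ (since $f(1) = g(1) + h(1) = 2g(1)$), together with $f(j) = 0$, we get $g(j) = -jf(1)/2$. Hence
\[
-f(1) = f(-1) = f(j^2) = 2j\,g(j) = -j^2 f(1) = f(1),
\]
so $f(1) = 0$. Thus the image of $f$ is $\{0\}$, which coincides with the subalgebra of $\mathbb{H}_{\mathbb{R}}$ generated by $f(1) = 0$.

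The main conceptual hurdle is the opening step: recognizing that the non-commutativity of $\mathbb{H}_{\mathbb{R}}$ (rather than any manipulation valid in the commutative case) is what forces $g = h$. Once that is in hand, the rest is a direct specialization of the defining identities to the basis $\{1, i, j, k\}$.
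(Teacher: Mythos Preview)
Your proof is correct but takes a longer route than the paper, and in fact proves more than the proposition asserts.

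The paper argues in one stroke: since $ij+ji=0$ and a left $\{g,h\}$-derivation is in particular a Jordan left $\{g,h\}$-derivation, $0=f(ij+ji)=2\bigl(ig(j)+jh(i)\bigr)$, whence $f(k)=f(ij)=ig(j)+jh(i)=0$; the same trick with the pairs $(j,k)$ and $(k,i)$ gives $f(i)=f(j)=0$, and linearity finishes. No appeal to $g=h$ or to the division-ring property is needed. You instead first force $g=h$ via the commutator identity $(ab-ba)(g(1)-h(1))=0$, then observe that $f(ab)=f(ba)$ kills commutators. Both arguments land on $f(i)=f(j)=f(k)=0$, but the paper's is a single application of the defining identity to an anticommuting pair.

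Your final paragraph, showing $f(1)=0$ via $f(j^2)=2jg(j)$, is correct and goes beyond the proposition: it yields $f\equiv 0$. In the paper this stronger conclusion appears only later, as the combination of the theorem that every Jordan left $\{g,h\}$-derivation on $\mathbb{H}_{\mathbb{R}}$ has $g=h$ with the theorem that every left $\{g,g\}$-derivation on $\mathbb{H}_{\mathbb{R}}$ vanishes. So your approach compresses three of the paper's results into one argument, at the cost of a slightly less direct path to the proposition as stated. One incidental benefit of your extra step: it makes the phrase ``subalgebra generated by $f(1)$'' unambiguous, since with $f(1)=0$ both the image and that subalgebra are $\{0\}$; the paper's proof only shows the image is $\mathbb{R}f(1)$ and silently identifies this with $\langle f(1)\rangle$.
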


\begin{proof}
Since $ij+ji=0$ and $f$ is a left $\{g,h\}$-derivation,
\begin{equation}
\label{22}
0=f(ij+ji)=2(ig(j)+jh(i)) \implies f(k)=f(ij)=ig(j)+jh(i)=0.
\end{equation}

Similarly, we get $f(i)=f(j)=0$. Now, let $q=a+bi+cj+dk \in \mathbb{H}_{\mathbb{R}}$, where $a,b,c,d\in \mathbb{R}$. Since $f$ is linear, $f(q)=af(1)$. So, $f(\mathbb{H}_{\mathbb{R}})=<f(1)>$.
\end{proof}

Now, we present the necessary and sufficient condition for Jordan left $\{g,h\}$-derivation to be a left $\{g,h\}$-derivation on $\mathbb{H}_{\mathbb{R}}$.

\begin{theorem}
\label{thm4}
Let $f$ be a Jordan left $\{g,h\}$-derivation over $\mathbb{H}_{\mathbb{R}}$. Then $f$ is a left $\{g,h\}$-derivation on $\mathbb{H}_{\mathbb{R}}$ if and only if $f(i)=f(j)=f(k)=0$.
\end{theorem}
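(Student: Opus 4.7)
The forward implication is essentially contained in the preceding proposition: if $f$ is a left $\{g,h\}$-derivation, then $f(ij)=ig(j)+jh(i)$, while $ij+ji=0$ together with the Jordan identity forces $2(ig(j)+jh(i))=0$, and hence $f(k)=0$; the vanishing of $f(i)$ and $f(j)$ follows from the analogous identities $jk+kj=0$ and $ki+ik=0$.

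For the converse, the plan is to assume $f(i)=f(j)=f(k)=0$ and verify the two left $\{g,h\}$-derivation identities
\[
f(ab)=ag(b)+bh(a), \qquad f(ab)=ah(b)+bg(a)
\]
for every $a,b\in\mathbb{H}_{\mathbb{R}}$. Both sides are $\mathbb{R}$-bilinear in $(a,b)$, so it suffices to check these equalities for $a,b$ ranging over the basis $\{1,i,j,k\}$, which reduces the problem to a finite case analysis. When $a=b=1$, the Jordan identity applied to $1\circ 1=2$ immediately gives $f(1)=g(1)+h(1)$. When exactly one of $a,b$ equals $1$, say $a=1$, the Jordan identity applied to $1\circ b=2b$ produces $f(b)=g(b)+bh(1)=h(b)+bg(1)$, which is precisely the requirement for the pair $(1,b)$, and the case $b=1$ is symmetric. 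When $a=b=p\in\{i,j,k\}$, the identity $f(p^2)=p(g(p)+h(p))=pg(p)+ph(p)$ extracted from the Jordan property handles both required equalities at once.

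The only case requiring a moment's thought, and the closest thing to an obstacle, is $a=p$, $b=q$ with distinct $p,q\in\{i,j,k\}$. Two independent facts here conspire: first, $pq+qp=0$, so the Jordan identity forces $pg(q)+qh(p)=0$ and $ph(q)+qg(p)=0$; second, $pq\in\{\pm i,\pm j,\pm k\}$, whence $f(pq)=0$ by our standing hypothesis together with $\mathbb{R}$-linearity. Both sides of each derivation identity therefore vanish, and the verification is complete. It is this double coincidence, that the quaternionic anticommutation simultaneously kills the Jordan right-hand side and places $f(pq)$ into the zero set prescribed by the hypothesis, that makes the vanishing condition on $f(i),f(j),f(k)$ exactly enough.
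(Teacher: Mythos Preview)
Your proof is correct. The forward direction matches the paper exactly (both defer to the preceding proposition). For the converse, the paper takes a slightly different route: rather than reducing to basis elements, it works directly with arbitrary $q=a+bi+cj+dk$ and $r=x+yi+zj+tk$, observing first that the hypothesis forces $f(q)=af(1)$ for every $q$, and then computing $f(qr+rq)=2(ax-by-cz-dt)f(1)$; since the real part of $qr$ is exactly $ax-by-cz-dt$, this equals $2f(qr)$, and dividing by $2$ turns the Jordan identity into the left $\{g,h\}$-derivation identity in one stroke. Your basis-reduction argument is equally valid and arguably more transparent, as it requires only the multiplication table of $\{1,i,j,k\}$ rather than the explicit formula for the real part of a quaternion product; the paper's computation is more compact but buys its brevity with a small calculation that your case analysis sidesteps.
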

\begin{proof}
Let $f(i)=f(j)=f(k)=0$ and $q=a+bi+cj+dk \in \mathbb{H}_{\mathbb{R}}$, where $a,b,c,d\in \mathbb{R}$. So, $f(q)=af(1)$.

Let $r=x+yi+zj+tk\in \mathbb{H}_{\mathbb{R}}$. We have to prove $f(qr)=qg(r)+rh(q)=qh(r)+rg(q)$. Since
\begin{align*}
&2(qg(r)+rh(q))=f(qr+rq)=2(ax-by-cz-dt)f(1) \\
&\implies qg(r)+rh(q)=(ax-by-cz-dt)f(1)=f(qr).
\end{align*}
Similarly, $qh(r)+rg(q)=f(qr)$. Therefore, $f$ is a left $\{g,h\}$-derivation on $\mathbb{H}_{\mathbb{R}}$.\\
The converse is true by the proof of Proposition \ref{thm3}.
\end{proof}

Now, we characterize Jordan left $\{g,h\}$-derivation on $\mathbb{H}_{\mathbb{R}}$.

\begin{theorem}
Let $\mathbb{H}_{\mathbb{R}}$ be the quaternion algebra over the field of real numbers. Then $f$ is a Jordan left $\{g, h\}$-derivation over $\mathbb{H}_{\mathbb{R}}$ if and only if $g=h$ and there exists $4$ elements in $\mathbb{R}$ such that
\begin{align*}
&g(q)=(aa_g^{(1)}-bb_g^{(1)}-cc_g^{(1)}-dd_g^{(1)})+(ab_g^{(1)}+ba_g^{(1)}+cd_g^{(1)}-dc_g^{(1)})i\\
&+(ac_g^{(1)}-bd_g^{(1)}+ca_g^{(1)}+db_g^{(1)})j
+(ad_g^{(1)}+bc_g^{(1)}-cb_g^{(1)}+da_g^{(1)})k,\\
&f(q)=2g(q),~\text{for all}~q\in \mathbb{H}_{\mathbb{R}}\\
&\text{where}~ q=a+bi+cj+dk~ \text{and}~ a_g^{(1)},b_g^{(1)},c_g^{(1)},d_g^{(1)}\in \mathbb{R}.
\end{align*}

Moreover, if $f$ is a Jordan left $\{g, g\}$-derivation over $\mathbb{H}_{\mathbb{R}}$, then $f$ and $g$ are right centralizers.
\end{theorem}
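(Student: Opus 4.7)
The plan is to split the forward direction into two independent steps: first force $g = h$ by exploiting the non-commutativity of $\mathbb{H}_{\mathbb{R}}$, then pin down the common map on the standard basis $\{1, i, j, k\}$ and extend by $\mathbb{R}$-linearity. The converse is a one-line verification, and the ``moreover'' assertion falls out immediately from the explicit form.

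The main obstacle is the reduction $g = h$; once that is settled everything else is routine. I would argue as follows: since $a \circ b = b \circ a$, the two expressions for $f(a \circ b)$ from the definition must agree, giving $a(g - h)(b) = b(g - h)(a)$ for all $a, b \in \mathbb{H}_{\mathbb{R}}$. Writing $d := g - h$ and specializing $a = 1$ yields $d(b) = b\, d(1)$, so the previous identity collapses to $(ab - ba)\, d(1) = 0$ for all $a, b$. Taking $a = i$, $b = j$ gives $2k\, d(1) = 0$; because $\mathbb{H}_{\mathbb{R}}$ is a division algebra this forces $d(1) = 0$, hence $d \equiv 0$ and $g = h$. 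This is the only step where the algebraic structure of $\mathbb{H}_{\mathbb{R}}$ is essential.

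With $g = h$, the Jordan identity specializes to $f(a^2) = 2a\, g(a)$. Putting $a = 1$ yields $f(1) = 2 g(1)$. For $a \in \{i, j, k\}$ we have $a^2 = -1$, so $a\, g(a) = -g(1)$, and multiplying on the left by $a^{-1} = -a$ gives $g(a) = a\, g(1)$. By $\mathbb{R}$-linearity, $g(q) = q\, g(1)$ for every $q \in \mathbb{H}_{\mathbb{R}}$; writing $g(1) = a_g^{(1)} + b_g^{(1)} i + c_g^{(1)} j + d_g^{(1)} k$ and expanding $q\, g(1)$ via Hamilton's relations produces precisely the stated componentwise formula for $g(q)$. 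Finally, the identity $q \circ 1 = 2q$ together with the Jordan relation gives $2 f(q) = 2\bigl(q\, g(1) + g(q)\bigr) = 4\, g(q)$, so $f(q) = 2 g(q)$.

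For the converse, suppose $g(q) = q\alpha$ (with $\alpha = g(1)$) and $f = 2g$. Then
\[
f(q \circ r) = 2(qr + rq)\alpha = 2\bigl(q \cdot r\alpha + r \cdot q\alpha\bigr) = 2\bigl(q\, g(r) + r\, g(q)\bigr),
\]
so $f$ is a Jordan left $\{g, g\}$-derivation. Both $g(q) = q\alpha$ and $f(q) = q(2\alpha)$ are right multiplications by fixed elements of $\mathbb{H}_{\mathbb{R}}$, so $g$ and $f$ are right centralizers, which gives the ``moreover'' clause.
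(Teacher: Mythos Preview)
Your argument is correct and cleaner than the paper's. The paper proceeds by brute-force coordinate computation: it writes $F(l)=a_F^{(l)}+b_F^{(l)}i+c_F^{(l)}j+d_F^{(l)}k$ for $F\in\{f,g,h\}$ and $l\in\{1,i,j,k\}$, then exploits the identities $f(1)=g(1)+h(1)$, $i^2=j^2=k^2=-1$, $2i=1\circ i$, $2j=1\circ j$, $2k=1\circ k$, and $i\circ j=0$ to generate a large system of scalar equations among the $48$ unknowns; only at the very end (from $i\circ j=0$ combined with the earlier relations) does $g=h$ drop out, after which the explicit form of $g$ follows. Your route isolates the conceptual content: the symmetry $a\circ b=b\circ a$ yields $a\,d(b)=b\,d(a)$ for $d:=g-h$, the specialization $a=1$ gives $d(b)=b\,d(1)$, and then a single nontrivial commutator in a division ring kills $d(1)$. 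This step works verbatim in any $2$-torsion-free unital algebra in which some commutator is a unit, so your reduction $g=h$ is strictly more general. Likewise, your derivation of $g(q)=q\,g(1)$ from $f(a^2)=2a\,g(a)$ on $a\in\{1,i,j,k\}$ and of $f=2g$ from $q\circ 1=2q$ is a short, basis-free substitute for the paper's componentwise bookkeeping; the converse and the ``moreover'' clause are handled the same way in both proofs.
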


\begin{proof}
Let
\begin{equation}
\label{m1}
\begin{aligned}
F(l)=a_{F}^{(l)}+b_{F}^{(l)}i+c_{F}^{(l)}j+d_{F}^{(l)}k, ~\text{for}~ F=f, g, h~\text{and}~l=1,i,j,k.
\end{aligned}
\end{equation}

Since $f(1)=g(1)+h(1)$,
\begin{equation}
\label{m2}
\begin{aligned}
a_{f}^{(1)}=a_{g}^{(1)}+a_{h}^{(1)},~ b_{f}^{(1)}=b_{g}^{(1)}+b_{h}^{(1)},~ c_{f}^{(1)}=c_{g}^{(1)}+c_{h}^{(1)}, ~ d_{f}^{(1)}=d_{g}^{(1)}+d_{h}^{(1)}.
\end{aligned}
\end{equation}

Since, $i^2=j^2=k^2=-1$,

\begin{equation}
\label{m3}
\begin{aligned}
& a_{f}^{(1)}=b_{g}^{(i)}+b_{h}^{(i)},~ b_{f}^{(1)}=-a_{g}^{(i)}-a_{h}^{(i)},~ c_{f}^{(1)}=d_{g}^{(i)}+d_{h}^{(i)}, ~d_{f}^{(1)}=-c_{g}^{(i)}-c_{h}^{(i)},\\
& a_{f}^{(1)}=c_{g}^{(j)}+c_{h}^{(j)},~ b_{f}^{(1)}=-d_{g}^{(j)}-d_{h}^{(j)},~ c_{f}^{(1)}=-a_{g}^{(j)}-a_{h}^{(j)},~ d_{f}^{(1)}=b_{g}^{(j)}+b_{h}^{(j)},\\
& a_{f}^{(1)}=d_{g}^{(k)}+d_{h}^{(k)},~ b_{f}^{(1)}=c_{g}^{(k)}+c_{h}^{(k)},~ c_{f}^{(1)}=-b_{g}^{(k)}-b_{h}^{(k)},~ d_{f}^{(1)}=-a_{g}^{(k)}-a_{h}^{(k)}.
\end{aligned}
\end{equation}

Since $2i=1\circ i=i\circ 1$, using \eqref{m1},
\begin{equation}
\label{m4}
\begin{aligned}
& a_{f}^{(i)}=a_{h}^{(i)}-b_{g}^{(1)}=a_{g}^{(i)}-b_{h}^{(1)},~b_{f}^{(i)}=b_{h}^{(i)}+a_{g}^{(1)}=b_{g}^{(i)}+a_{h}^{(1)}\\
& c_{f}^{(i)}=c_{h}^{(i)}-d_{g}^{(1)}=c_{g}^{(i)}-d_{h}^{(1)},~d_{f}^{(i)}=d_{h}^{(i)}+c_{g}^{(1)}=d_{g}^{(i)}+c_{h}^{(1)}.
\end{aligned}
\end{equation}
By using \eqref{m2}-\eqref{m4},
\begin{equation}
\label{m5}
\begin{aligned}
& 2a_{f}^{(i)}=a_{h}^{(i)}-b_{g}^{(1)}+a_{g}^{(i)}-b_{h}^{(1)}=(a_{g}^{(i)}+a_{h}^{(i)})-(b_{g}^{(1)}+b_{h}^{(1)})=-2b_{f}^{(1)}\\
& \implies a_{f}^{(i)}=-b_{f}^{(1)}.~
 \text{Similarly,}~
 b_{f}^{(i)}=a_{f}^{(1)},~
 c_{f}^{(i)}=-d_{f}^{(1)},~
 d_{f}^{(i)}=c_{f}^{(1)}.
\end{aligned}
\end{equation}
Similarly, using $2j=1\circ j=j\circ 1$, $2k=1\circ k=k\circ 1$, \eqref{m3} and \eqref{m4}, we have
\begin{equation}
\label{m6}
\begin{aligned}
 & a_{f}^{(j)}=-c_{f}^{(1)},~
 b_{f}^{(j)}=d_{f}^{(1)},~
 c_{f}^{(j)}=a_{f}^{(1)},~
 d_{f}^{(j)}=-b_{f}^{(1)},~\\
 & a_{f}^{(k)}=-d_{f}^{(1)},~
 b_{f}^{(k)}=-c_{f}^{(1)},~
 c_{f}^{(k)}=b_{f}^{(1)},~
 d_{f}^{(k)}=a_{f}^{(1)}.
\end{aligned}
\end{equation}
Also, in view of \eqref{m3}-\eqref{m6}, for $F=g$ and $h$,
\begin{equation}
\label{m7}
\begin{aligned}
 & a_{F}^{(i)}=-b_{F}^{(1)},~
 b_{F}^{(i)}=a_{F}^{(1)},~
 c_{F}^{(i)}=-d_{F}^{(1)},~
 d_{F}^{(i)}=c_{F}^{(1)},\\
 & a_{F}^{(j)}=-c_{F}^{(1)},~
 b_{F}^{(j)}=d_{F}^{(1)},~
 c_{F}^{(j)}=a_{F}^{(1)},~
 d_{F}^{(j)}=-b_{F}^{(1)},~\\
 & a_{F}^{(k)}=-d_{F}^{(1)},~
 b_{F}^{(k)}=-c_{F}^{(1)},~
 c_{F}^{(k)}=b_{F}^{(1)},~
 d_{F}^{(k)}=a_{F}^{(1)}.
\end{aligned}
\end{equation}

Since $i\circ j=0$,
\begin{equation}
\label{m8}
\begin{aligned}
b_{g}^{(j)}=-c_{h}^{(i)},~ a_{g}^{(j)}=-d_{h}^{(i)},~d_{g}^{(j)}=a_{h}^{(i)},~ c_{g}^{(j)}=b_{h}^{(i)}.
\end{aligned}
\end{equation}
Again, by \eqref{m7} and \eqref{m8}, $x_{g}^{(1)}=x_{h}^{(1)}$, for all $x=a,b,c,d$. Therefore, $g=h$.
Let $q\in\mathbb{H}_{\mathbb{R}}$. Then $q=a+bi+cj+dk$,  where $a,b,c,d\in \mathbb{R}$.
Hence, by \eqref{m2} and \eqref{m5}-\eqref{m8},
\begin{equation}
\label{m9}
\begin{aligned}
&g(q)=(aa_g^{(1)}-bb_g^{(1)}-cc_g^{(1)}-dd_g^{(1)})+(ab_g^{(1)}+ba_g^{(1)}+cd_g^{(1)}-dc_g^{(1)})i\\
&+(ac_g^{(1)}-bd_g^{(1)}+ca_g^{(1)}+db_g^{(1)})j
+(ad_g^{(1)}+bc_g^{(1)}-cb_g^{(1)}+da_g^{(1)})k,\\
&f(q)=2g(q).
\end{aligned}
\end{equation}
Moreover, the required number of elements from $\mathbb{R}$ to express $g$ and $f$ is $4$.\\
Conversely, let $g$ and $f$ be of the form \eqref{m9}, for all $q\in\mathbb{H}_{\mathbb{R}}$, where $q=a+bi+cj+dk$ and $a,b,c,d\in \mathbb{R}$. Then $g(q)=q\alpha$ and $f(q)=2q\alpha$, where $\alpha=a_{g}^{(1)}+b_{g}^{(1)}i+c_{g}^{(1)}j+d_{g}^{(1)}k$. Now, let $p\in\mathbb{H}_{\mathbb{R}}$. Then $p=x+yi+zj+tk$, where $x,y,z,t\in \mathbb{R}$. Also, by direct computation,
\begin{align*}
f(pq+qp)=2(pg(q)+qg(p)).
\end{align*}
The last part can easily be derived from the converse part.
\end{proof}
 Below is an example of a Jordan left $\{g,g\}$-derivation on $\mathbb{H}_{\mathbb{R}}$ which is not a left $\{g,g\}$-derivation. Also, the condition of Theorem \ref{thm4} is not satisfied in this case.
\begin{example}
Let $q=a+bi+cj+dk \in \mathbb{H}_{\mathbb{R}}$, where $a,b,c,d\in \mathbb{R}$. Define $f,g:\mathbb{H}_{\mathbb{R}}\rightarrow \mathbb{H}_{\mathbb{R}}$ as $f(q)=2q$ and $g(q)=q$ respectively. Then $f,g$ are linear maps and $f$ is also a Jordan left $\{g,g\}$-derivation. But $f(ij)=f(k)=2k\neq 0=ij+ji=ig(j)+jg(i)$, so $f$ is not a left $\{g,g\}$-derivation.
\end{example}
Our next result shows that there is no nonzero left $\{g,g\}$-derivation on $\mathbb{H}_{\mathbb{R}}$.
\begin{theorem}
\label{thm7}
Let $f$ be a left $\{g,g\}$-derivation over $\mathbb{H}_{\mathbb{R}}$. Then $g$ and $f$ are identically zero.
\end{theorem}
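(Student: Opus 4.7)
The plan is to combine Proposition \ref{thm3} with the characterization theorem for Jordan left $\{g,h\}$-derivations on $\mathbb{H}_{\mathbb{R}}$ that immediately precedes this statement. Since every left $\{g,g\}$-derivation is in particular a left $\{g,h\}$-derivation (with $h=g$), and also a Jordan left $\{g,g\}$-derivation, both results apply.

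First, by the characterization theorem (applied with $h=g$), there exists $\alpha = a_{g}^{(1)} + b_{g}^{(1)}i + c_{g}^{(1)}j + d_{g}^{(1)}k \in \mathbb{H}_{\mathbb{R}}$ such that $g(q) = q\alpha$ and $f(q) = 2q\alpha$ for all $q \in \mathbb{H}_{\mathbb{R}}$. In particular, $f(i) = 2i\alpha$.

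Second, by Proposition \ref{thm3} (its proof, following from $ij+ji=0$ and the analogous identities $jk+kj=0$, $ki+ik=0$), we have $f(i) = f(j) = f(k) = 0$. Combining with the previous step, $2i\alpha = 0$. Since $\mathbb{H}_{\mathbb{R}}$ is a division algebra and $i \neq 0$, this forces $\alpha = 0$. Consequently $g(q) = q\alpha = 0$ and $f(q) = 2g(q) = 0$ for all $q \in \mathbb{H}_{\mathbb{R}}$.

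There is no real obstacle here; the substantive work has been done in the two preceding results. The only point worth being careful about is the reduction: we must invoke the Jordan-level characterization to pin down the form $f(q) = 2q\alpha$, and then use the genuine (not merely Jordan) left $\{g,g\}$-derivation property via Proposition \ref{thm3} to eliminate $\alpha$. The division-algebra structure of $\mathbb{H}_{\mathbb{R}}$ is the feature that makes $i\alpha = 0 \Rightarrow \alpha = 0$, and hence drives the conclusion.
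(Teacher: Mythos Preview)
Your proof is correct and follows essentially the same approach as the paper: both invoke the Jordan characterization theorem to write $g(q)=q\alpha$, $f(q)=2q\alpha$, and then use the vanishing $f(i)=f(j)=f(k)=0$ (the paper cites Theorem~\ref{thm4}, whose converse direction is exactly Proposition~\ref{thm3}) to force $\alpha=0$. The only difference is cosmetic: the paper compares coordinate expressions for $g(i)$ coming from \eqref{m7} and from $0=f(i)=g(i)+ig(1)$ to kill each coefficient of $\alpha$, whereas you read off $0=f(i)=2i\alpha$ directly and appeal to the division-algebra property, which is a cleaner way to reach the same conclusion.
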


\begin{proof}
Let $g$ and $f$ be of the form \eqref{m1}. Since every left $\{g, g\}$-derivation over $\mathbb{H}_{\mathbb{R}}$ is a Jordan left $\{g, g\}$-derivation, so \eqref{m7} and \eqref{m9} hold. By Theorem \ref{thm4}, $f(i)=f(j)=f(k)=0$. Since $i=1i=i1$, using \eqref{m1},
\begin{equation}
\label{m10}
\begin{aligned}
a_{g}^{(i)}=b_{g}^{(1)},~b_{g}^{(i)}=-a_{g}^{(1)},~ c_{g}^{(i)}=d_{g}^{(1)},~d_{g}^{(i)}=-c_{g}^{(1)}.
\end{aligned}
\end{equation}

Since $ij=k$ and $f(k)=0$,
\begin{equation}
\label{m11}
\begin{aligned}
b_{g}^{(j)}=-c_{g}^{(i)},~ a_{g}^{(j)}=-d_{g}^{(i)},~d_{g}^{(j)}=a_{g}^{(i)},~ c_{g}^{(j)}=b_{g}^{(i)}.
\end{aligned}
\end{equation}

Now, by \eqref{m7}, \eqref{m10} and \eqref{m11}, $a_{g}^{(1)}=b_{g}^{(1)}=c_{g}^{(1)}=d_{g}^{(1)}=0$. So, by \eqref{m9}, $g=0=f$.
\end{proof}

Finally, we give an example of a Jordan left $\{g, g\}$-derivation $f$ on $\mathbb{H}_{\mathbb{R}}$ where $g$ and $f$ are not left centralizers.
\begin{example}
Suppose $q=a+bi+cj+dk \in \mathbb{H}_{\mathbb{R}}$ and $g,f:\mathbb{H}_{\mathbb{R}} \rightarrow \mathbb{H}_{\mathbb{R}}$ are defined by $g(q)=(a-2b-3c-4d)+(2a+b+4c-3d)i
+(3a-4b+c+2d)j+(4a+3b-2c+d)k$ and $f(q)= 2g(q)$
respectively. Then $f$ is a Jordan left $\{g, g\}$
derivation on $\mathbb{H}_{\mathbb{R}}$. Further, if $p=5+6i+7j+8k$ and $q=9+10i+11j+12k$, then it can be seen that $g(pq)\neq g(p)q$ and $f(pq)\neq f(p)q$. Therefore, $g$ and $f$ are not left centralizers.
\end{example}

\begin{remark}
By Frobenious theorem, every finite dimensional noncommutative division algebra over $\mathbb{R}$ is isomorphic to $\mathbb{H}_{\mathbb{R}}$. Therefore, Theorem \ref{thm7} is true for every noncommutative finite dimensional division algebra over $\mathbb{R}$.
\end{remark}

\section*{Acknowledgement}
The authors are thankful to DST, Govt. of India for financial support and Indian Institute of Technology Patna for providing the research facilities.

\end{document}